\documentclass[12pt]{amsart}
\usepackage[english]{babel}
\usepackage{amsmath,amsthm,amssymb,amsfonts}

\textheight=22.8cm
\textwidth = 6.46 true in
\marginparsep=0cm
\oddsidemargin=0.0cm
\evensidemargin=0.0cm
\headheight=13pt
\headsep=0.8cm
\parskip=0pt
\hfuzz=6pt
\widowpenalty=10000
\setlength{\topmargin}{-0.6cm}

\begin{document}

\hfuzz=6pt

\widowpenalty=10000

\newtheorem{theorem}{Theorem}[section]
\newtheorem{proposition}[theorem]{Proposition}
\newtheorem{coro}[theorem]{Corollary}
\newtheorem{lemma}[theorem]{Lemma}
\newtheorem{definition}[theorem]{Definition}
\newtheorem{assum}{Assumption}[section]
\newtheorem{example}[theorem]{Example}
\newtheorem{remark}[theorem]{Remark}

\renewcommand{\theequation}
{\thesection.\arabic{equation}}
\newcommand{\one}{1\hspace{-4.5pt}1}
\newcommand\RR{\mathbb{R}}
\newcommand\CC{\mathbb{C}}
\newcommand\NN{\mathbb{N}}
\renewcommand\Re{\operatorname{Re}}
\renewcommand\Im{\operatorname{Im}}
\newcommand\D{\mathcal{D}}
\newcommand{\disp}[1]{\displaystyle{#1}}
\newcommand{\und}[1]{\underline{#1}}
\newcommand{\norm}[1]{\left\Vert#1\right\Vert}
\newcommand{\supp}{\textrm{supp}}

\def\ang#1{\langle #1 \rangle}
\def \SL {\sqrt L}
\def \l {\lambda}
\def\HSL { H^p_{L, S_h}(X) }
\def\pl {\partial}

\title[Sharp  spectral multipliers for Hardy spaces ]{%
Sharp  spectral multipliers  for Hardy spaces associated
to  non-negative
self-adjoint  operators satisfying \\ Davies-Gaffney estimates }
\medskip

\author{Peng Chen}
\address{Peng Chen, Department of Mathematics, Macquarie University, NSW 2109, Australia}
\email{achenpeng1981@163.com} \subjclass[2000]{42B15, 42B20, 47F05.}
\keywords{ Spectral multipliers, Hardy spaces, non-negative
self-adjoint  operator, Davies-Gaffney estimate, restriction type
estimate, Bochner-Riesz means.}

\begin{abstract}
We consider the abstract non-negative self-adjoint operator $L$
acting on $L^2(X)$ which satisfies Davies-Gaffney estimates and the
corresponding Hardy spaces $H^p_L(X)$. We assume that doubling
condition holds for the metric measure space $X$. We show that a
sharp H\"ormander-type spectral multiplier theorem on $H^p_L(X)$
follows from restriction type estimates and the Davies-Gaffney
estimates.
 We also describe the sharp result for the boundedness of Bochner-Riesz means on
$H^p_L(X)$.
\end{abstract}

\maketitle

\section{Introduction}
\setcounter{equation}{0} Suppose that $L$ is  a  non-negative
self-adjoint operator acting on $L^2({X,\mu})$, where $X$ is a
measure space with measure $\mu$. Then $L$ admits a spectral
resolution $E(\lambda)$ and for any bounded Borel function $F: [0,
\infty)\rightarrow {\Bbb C}$, one can define the operator
\begin{eqnarray}
F(L)=\int_0^{\infty} F(\lambda) dE(\lambda). \label{e1.4}
\end{eqnarray}
By the spectral theorem, this operator is bounded on $L^2(X)$.
Spectral multiplier theorems give sufficient conditions on   $F$ and
$L$
  which imply the boundedness of $F(L)$  on various functional
spaces defined on $X$. This is one of active topics in harmonic
analysis and has been studied extensively. We refer the reader to
 \cite{A, B, C, COSY, CS2, DeM, DOS, DP1, DY2, GHS, M, St}
   and the references therein.

\medskip

Before we state our main result we describe our basic assumptions.
We throughout assume that the considered metric measure space
$(X,d,\mu)$ with a distance $d$ and a non-negative Borel measure
$\mu$ satisfies the volume doubling condition: there exists a
constant $C>0$ such that for all $x\in X$ and for all $r>0$,
\begin{eqnarray}
V(x,2r)\leq C V(x, r)<\infty, \label{e1.1}
\end{eqnarray}
where $V(x,r)$ is the volume of the ball $B(x,r)$ centered at $x$ of
radius $r$. In particular, $X$ is a space of homogeneous type. See
for example \cite{CW}.

Note that the doubling condition \eqref{e1.1} implies that there
exist some constants $C, n>0$ such that
\begin{equation}
V(x, \lambda r)\leq C\lambda^n V(x,r) \label{e1.2}
\end{equation}
uniformly for all $\lambda\geq 1$ and $x\in X$. In the sequel, we
shall consider $n$ as small as possible. In the Euclidean space with
Lebesgue measure, the parameter $n$ is the dimension of the space.

\medskip

In this paper, we consider the following two conditions
 corresponding to the operator $L$.

 First, the operator $L$ is a non-negative self-adjoint
operator acting on $L^2({X})$ and the semigroup $\{e^{-tL}\}_{t>0}$
generated by $L$ satisfies the {\it Davies-Gaffney condition} (See
for example \cite{D}). That is, there exist constants $C$, $c>0$
such that for any open subsets $U_1,\,U_2\subset X$,
\begin{equation}\label{e1.5}
|\langle e^{-tL}f_1, f_2\rangle| \leq C\exp\Big(-{{\rm
dist}(U_1,U_2)^2\over c\,t}\Big)
\|f_1\|_{L^2(X)}\|f_2\|_{L^2(X)},\quad\forall\,t>0, \nonumber
\leqno{\rm (GE)}
\end{equation}
 for every $f_i\in L^2(X)$ with $\mbox{supp}\,f_i\subset
U_i$, $i=1,2$, where ${\rm dist}(U_1,U_2):=\inf_{x\in U_1, y\in U_2}
d(x,y)$.

\smallskip

 Second, the operator $L$ satisfies
{\it  restriction type estimates}. Given a subset $E\subseteq X$, we
define the projection operator $P_E$ by multiplying by the
characteristic function of $E$, that is,
$$
P_Ef(x)=\chi_E(x) f(x).
$$
For a function $F: {\mathbb R}\rightarrow {\mathbb C}$ and
$R>0$, we denote by $\delta_RF:  {\mathbb R}\rightarrow {\mathbb C}$
the function $x\to F(Rx).$
    Following  \cite{COSY}, we say that a non-negative self-adjoint operator $L$ satisfies
    {\it restriction type estimates} if
  for any $R>0$ and all Borel functions $F$ such that supp $F \subset [0,R]$,
  there exist some $p_0$ and $q$ satisfying $1\leq p_0<2$ and $1\leq q\leq\infty$
  such that
\begin{eqnarray}\label{RE1.4}
\big\|F(\SL)P_{B(x, r)} \big\|_{p_0\rightarrow 2} \leq CV(x,
r)^{{1\over 2}-{1\over p_0}} \big( Rr \big)^{n({1\over p_0}-{1\over
2})}\big\|\delta_RF\big\|_{L^q}
\end{eqnarray}
 for all $x\in X$ and $r\geq 1/R$,   where $n$ is the
dimension entering doubling volume condition (\ref{e1.2}). When
$L=-\Delta$ on $\mathbb{R}^n$, this estimate is equivalent to the
classic $(p, 2)$ restriction estimate of Stein-Tomas. See
\cite{COSY} or Proposition~\ref{prop2.3} below.

\medskip

The aim of this paper is to obtain a sharp H\"ormander-type spectral
multiplier theorem for abstract operators which generate semigroups
satisfying Davies-Gaffney condition. More precisely, our result
shows that restriction type estimates imply sharp spectral
multipiers
 on Hardy spaces $H^p_L(X)$ for $p>0$,
 where  $H^p_L(X)$  is a new class of Hardy spaces   associated to $L$ (
 \cite{ADM, AMR, DL, DP2, DY, DY2, HLMMY, HM, HMMc, JY}, see  Section 2
 below). The theorem is valid for abstract self-adjoint operators. However,
 before the result can be applied one has to verify conditions ${\rm (GE)}$ and \eqref{RE1.4}.
  Usually proving restriction type condition \eqref{RE1.4} is difficult. See discussions in \cite{COSY}.
   We discuss several examples of operators which satisfy
 required restriction type estimates in Section~4. On the other
 hand, condition \eqref{RE1.4} with $p_0=1$ and $q=\infty$ follows from Gaussian estimates
 \eqref{e1.8} for the heat kernel corresponding to the operator. See
 discussions in \cite{COSY} and \cite{DOS}.

Let $\phi$ be a nontrivial compact supported smooth function and
define the Sobolev norm
$\|F\|_{W^{s,q}}=\|(I-d^2/dx^2)^{s/2}F\|_{L^2}$.
  The following theorem is the main result of the paper.

 \begin{theorem}\label{th1.1}
Consider the doubling metric measure space $(X,d,\mu)$ which
satisfies (\ref{e1.2}) with dimension $n$. Assume that the operator
$L$ satifies Davies-Gaffney estimate {\rm (GE)} and  the restriction
type condition \eqref{RE1.4} for some $p_0,q$ satisfying $1\leq
p_0<2$ and $1\leq q\leq \infty$. Suppose that $0<p\leq 1$ and for a
bounded Borel function $F$, there exists some constant
$s>n(1/p-1/2)$ such that
\begin{eqnarray}\label {e1.7}
\sup_{t>0}\|\phi\delta_tF\|_{W^{s,q}}< \infty.
\end{eqnarray}
 Then the operator
$F(\sqrt{L})$ is bounded on $H_L^p(X)$,     i.e., there exists a
constant $C>0$ such that
\begin{eqnarray*}
   \|F(\SL)f\|_{H^p_L(X)}\leq    C\|  f\|_{H^p_L(X)}.
\end{eqnarray*}
\end{theorem}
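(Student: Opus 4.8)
\emph{Reduction to atoms.}
The first move is to pass from $H^p_L(X)$ to a workable description. For $0<p\le1$ the space $H^p_L(X)$ coincides with the square-function space $H^p_{L,S_h}(X)$ and, equivalently, admits an atomic/molecular decomposition: its quasi-norm is controlled by $(p,2,M,\epsilon)$-molecular decompositions for every integer $M>\frac n2(\frac1p-\frac12)$ and every $\epsilon>0$ (Section~2). Since $F$ is bounded, $F(\sqrt L)$ is bounded on $L^2(X)$ by the spectral theorem, and $H^p_L(X)\cap L^2(X)$ is dense in $H^p_L(X)$; a routine approximation argument then reduces the theorem to proving that, for suitable $M$ and $\epsilon$, $\|F(\sqrt L)a\|_{H^p_L}\le C$ uniformly over all $(p,2,M)$-atoms $a$. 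Fix such an atom $a=L^Mb_B$, associated with a ball $B=B(x_B,r_B)$ and satisfying $\supp(L^kb_B)\subset B$, $\|(r_B^2L)^kb_B\|_{L^2}\le r_B^{2M}V(B)^{1/2-1/p}$ for $0\le k\le M$; here $M$ may be taken as large as convenient.

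\emph{Splitting the square function.}
Using $\|F(\sqrt L)a\|_{H^p_L}^p=\|S_h(F(\sqrt L)a)\|_{L^p}^p$, decompose $X=4B\cup\bigcup_{j\ge2}S_{j+1}(B)$ with $S_{j+1}(B)=2^{j+1}B\setminus2^jB$. The contribution of $4B$ is elementary: H\"older's inequality, the $L^2$-boundedness of $S_h$, the spectral bound $\|F(\sqrt L)a\|_{L^2}\le\|F\|_{L^\infty}\|a\|_{L^2}$ and $\|a\|_{L^2}\le V(B)^{1/2-1/p}$ give $\int_{4B}(S_h(F(\sqrt L)a))^p\,d\mu\lesssim1$. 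The whole difficulty is thus the tail sum $\sum_{j\ge2}\int_{S_{j+1}(B)}(S_h(F(\sqrt L)a))^p\,d\mu$. Fix $j\ge2$; after a Fubini rearrangement of the square function it suffices to bound $V(2^{j+1}B)^{1-p/2}\big(\int_0^\infty\|t^2Le^{-t^2L}F(\sqrt L)a\|_{L^2(\mathcal N_t)}^2\,\frac{dt}{t}\big)^{p/2}$, where $\mathcal N_t$ is the $t$-neighbourhood of $S_{j+1}(B)$, and we split this $t$-integral at $t\sim2^jr_B\sim d(S_{j+1}(B),x_B)$.

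\emph{The tail: cancellation at large $t$, restriction and finite speed at small $t$.}
For $t\gtrsim2^jr_B$ we use the cancellation of the atom: writing $t^2Le^{-t^2L}F(\sqrt L)a=\big(t^2\lambda^{2M+2}e^{-t^2\lambda^2}F(\lambda)\big)(\sqrt L)b_B$ and $\sup_{\lambda>0}t^2\lambda^{2M+2}e^{-t^2\lambda^2}\le C_Mt^{-2M}$ yields $\|t^2Le^{-t^2L}F(\sqrt L)a\|_{L^2}\lesssim\|F\|_{L^\infty}(r_B/t)^{2M}V(B)^{1/2-1/p}$, so this part of the $t$-integral is $\lesssim 2^{-4jM}\|F\|_{L^\infty}^2V(B)^{1-2/p}$; combined with $V(2^{j+1}B)^{1-p/2}\lesssim2^{jn(1-p/2)}V(B)^{1-p/2}$ it is summable in $j$ exactly when $M>\frac n2(\frac1p-\frac12)$, which we assume. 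For $t\lesssim2^jr_B$ we use that $t^2Le^{-t^2L}F(\sqrt L)$ is, thanks to the factor $e^{-t^2L}$, essentially frequency-localised to $\lambda\lesssim1/t$, its dyadic frequency components having $W^{s,q}$-norms controlled by $\sup_{\tau>0}\|\phi\delta_\tau F\|_{W^{s,q}}$ times a factor decaying rapidly away from frequency $1/t$. The key analytic ingredient is a weighted off-diagonal estimate for frequency-localised operators, obtained by combining the restriction type estimate \eqref{RE1.4} with finite speed of propagation for the wave group $\cos(\sigma\sqrt L)$ --- which is equivalent to the Davies--Gaffney estimate {\rm (GE)} --- in the spirit of the kernel estimates of \cite{COSY}: for $\supp G\subset[R/4,R]$ and $\kappa$ in an admissible range (dictated by $s$ and $q$), $\sup_{y}\big\|(1+Rd(\cdot,y))^{\kappa}G(\sqrt L)P_{B(y,1/R)}\big\|_{p_0\to2}\lesssim V(B(y,1/R))^{1/2-1/p_0}\|\delta_RG\|_{W^{s,q}}$. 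Applying this to the frequency pieces of $t^2Le^{-t^2L}F(\sqrt L)$, together with $\|a\|_{L^{p_0}}\le V(B)^{1/p_0-1/p}$, and then summing in frequency, integrating over $t\in(0,2^jr_B)$, and summing over the annuli against $V(2^{j+1}B)\lesssim2^{jn}V(B)$, one checks that the surplus of smoothness $s-n(\frac1p-\frac12)=\big[s-n(\frac1{p_0}-\frac12)\big]-n(\frac1p-\frac1{p_0})>0$ (note $p\le1\le p_0$) is precisely what survives to produce a summable factor $2^{-j\epsilon}$, after choosing $\kappa$ appropriately. This yields $\sum_{j\ge2}\int_{S_{j+1}(B)}(S_h(F(\sqrt L)a))^p\,d\mu\lesssim\big(\sup_{\tau>0}\|\phi\delta_\tau F\|_{W^{s,q}}\big)^p$, which together with the previous steps proves the theorem.

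\emph{Where the difficulty lies.}
The genuinely delicate regime is the small-$t$ analysis at the \emph{critical scale} $t\sim2^jr_B\sim d(S_{j+1}(B),x_B)$: there the wave propagator has speed-one range comparable to the distance from $S_{j+1}(B)$ to $B$, so finite speed of propagation buys essentially nothing, and the estimate must be wrung out of the interplay of the smoothness index $s$, the loss $(Rr)^{n(1/p_0-1/2)}$ inherent in \eqref{RE1.4}, and the doubling exponent $n$ governing $V(2^jB)/V(B)$. Tracking these three competing quantities accurately enough to land on the sharp threshold $s>n(1/p-1/2)$, rather than anything larger, is the heart of the matter. A secondary technical nuisance is transferring the $W^{s,q}$-bound \eqref{e1.7} into the $L^1$-type Fourier estimates underlying finite speed of propagation, which requires Hausdorff--Young and a careful bookkeeping of the index $q$ and the auxiliary exponent $\kappa$.
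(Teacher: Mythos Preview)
Your overall strategy---atomic decomposition, annular splitting of the square function, and off-diagonal estimates built from finite speed propagation together with the restriction bound~\eqref{RE1.4}---is the right one and is essentially what underlies the paper's argument. But there is a genuine gap in your small-$t$ analysis, precisely at the ``critical scale'' you flag.

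In the range $r_B\lesssim t\lesssim 2^jr_B$ the operator $t^2Le^{-t^2L}$ is localised to frequencies $\lambda\sim 1/t$, i.e.\ to dyadic pieces $2^\ell$ with $2^{-j}\lesssim 2^\ell r_B\lesssim 1$. For such a piece your weighted off-diagonal estimate, applied with $\|a\|_{L^{p_0}}\le V(B)^{1/p_0-1/p}$, yields only
\[
\big\|t^2Le^{-t^2L}F_\ell(\sqrt L)a\big\|_{L^2((2^jB)^c)}
\ \lesssim\ (2^j r_B\,2^\ell)^{-s}\,V(B)^{1/2-1/p}\,\|\phi\delta_{2^\ell}F\|,
\]
and at $2^\ell r_B\sim 2^{-j}$ the factor $(2^jr_B\,2^\ell)^{-s}$ is $O(1)$; summing over these $\ell$ (equivalently, integrating over $t\in(r_B,2^jr_B)$) therefore gives only a bounded contribution. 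After multiplying by $V(2^{j+1}B)^{1-p/2}\lesssim 2^{jn(1-p/2)}V(B)^{1-p/2}$ this is not summable in $j$. The ``interplay of $s$, the loss in \eqref{RE1.4}, and doubling'' that you invoke closes the estimate only for the high-frequency pieces $2^\ell r_B\ge 1$; for the low-frequency pieces it does not.

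What is missing is that the atom's cancellation $a=L^Mb_B$ must also be used in this regime, not only for $t\gtrsim 2^jr_B$. Writing $F_\ell(\sqrt L)a=(\lambda^{2M}F_\ell)(\sqrt L)b_B$ and applying the off-diagonal bound to $b_B$ produces an extra factor $(2^\ell r_B)^{2M}$, and since $2M>s$ the low-frequency sum then collapses to the desired $2^{-js}$. The paper carries this out in a more modular way: it first invokes the criterion of Lemma~\ref{le2.7}, which reduces $H^p_L$-boundedness to the single $L^2$ off-diagonal condition
\[
\big\|F(\sqrt L)(I-e^{-r_B^2L})^Mf\big\|_{L^2(2^jB\setminus 2^{j-1}B)}\le C\,2^{-js}\|f\|_{L^2(B)},
\]
so that the cancellation is already encoded in the factor $(I-e^{-r_B^2L})^M$. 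It then decomposes $F$ dyadically and applies its off-diagonal Lemma~\ref{le3.1} (your weighted estimate, stated with Besov norms $B^{q,1}_s$) to the combined multiplier $F_\ell(\lambda)(1-e^{-r_B^2\lambda^2})^M$; on $\mathrm{supp}\,\phi(2^{-\ell}\cdot)$ this contributes exactly $(2^\ell r_B)^{2M}$ when $2^\ell r_B<1$, and the two cases $2^\ell r_B\gtrless 1$ sum to $2^{-js}$ as in \eqref{e3.10}--\eqref{e3.11}.
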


\noindent {\bf Remarks:}

\smallskip

i) Theorem~\ref{th1.1} is sharp when $q=2$ by considering
Bochner-Riesz means on the spaces $H_L^p(X)$. See
Corollary~\ref{coro3.3} below. When $L=-\Delta$ on $\mathbb{R}^n$,
it satisfies \eqref{RE1.4} with $q=2$ for all $1\leq p_0\leq
(2n+2)/(n+3)$ and from this theorem we can obtain sharp results for
the boundedness of classic Bochner-Riesz means on Hardy spaces
$H^p(\mathbb{R}^n)$.
\smallskip

ii) In  \cite{DY2}, Theorem~\ref{th1.1} was obtained under the
condition (\ref{e1.7}) with the norm $W^{s,\infty}$. Note that for
fixed $p_0$ if condition \eqref{RE1.4} holds for some $q\in [1,
\infty)$,
 then \eqref{RE1.4}  holds for all $\tilde{q}\geq q$ including the case
 $\tilde{q}=\infty$ and also note that the
smaller $q$ is, the weaker condition (\ref{e1.7}) is. Although $q=2$
leads to the sharp result, we have examples that the operator
satisfies \eqref{RE1.4} with some $q>2$ but it does not satisfy
\eqref{RE1.4} with $q=2$. For example, harmonic oscillator
$L=-d^2/dx^2+x^2$ acting on $L^2(\mathbb{R})$ satisfies
\eqref{RE1.4} with $p_0=1$ and $q=4$ other than $q=2$. See
\cite[Section 7.5]{DOS} for more discussion.

\smallskip

iii) We do not expect $q<2$ for condition \eqref{RE1.4}. If
\eqref{RE1.4} holds for some $q<2$, by Theorem~\ref{th1.1}, the
classic Bonchner-Riesz mean operator would be bounded for some
$\delta<n(1/p-1/2)-1/2$ and this contradicts the well known result
that $\delta>n(1/p-1/2)-1/2$ is necessary for the classic
Bonchner-Riesz summability. However, if we consider $p_0 \to r$ norm
in \eqref{RE1.4} with some $r>2$ instead of $p_0 \to 2$ norm, using
analogous argument, we can get similar results for some $1\leq q<2$.

\medskip

A standard application of spectral multiplier theorems is to
consider the boundedness of Bochner-Riesz means. Let us recall that
Bochner-Riesz means of order $\delta$ for a non-negative
self-adjoint operator $L$ are defined by the formula
\begin{equation}\label{e3.8}
S_R^{\delta} (L)  = \bigg(I-{L\over R^2}\bigg)_+^{\delta},\ \ \ \ R>0.
\end{equation}

\noindent In Theorem~\ref{th1.1}, if one chooses $F(\lambda) =
(1-\l^2)^{\delta}_+$ then $F\in W^{\beta,q}$ if and only if
${\delta}>\beta-1/q$. We then have the following corollary, which
generalizes the classical result due to Sj\"olin \cite{Sj} and
Stein-Taibleson-Weiss\cite{STW} on the Bochner-Riesz means, and this
result is sharp for Laplacian on ${\mathbb R}^n$ (see \cite{Sj}).

\begin{coro}\label{coro3.3}
 Assume that the operator
$L$ satifies Davies-Gaffney estimate {\rm (GE)} and  the restriction
type condition \eqref{RE1.4} for some $p_0,q$ satisfying $1\leq
p_0<2$ and $1\leq q\leq \infty$. Suppose that $0<p\leq 1$.
 Then for all  ${\delta}>n(1/p-1/2)-1/q$, we
have
\begin{eqnarray}\label {e3.14}
\Big\|\Big(I-{L\over R^2}\Big)_+^{\delta}\Big\|_{H_L^p\rightarrow
H_L^p}\leq C
\end{eqnarray}
uniformly in $R>0.$
\end{coro}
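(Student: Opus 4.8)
The plan is to deduce Corollary~\ref{coro3.3} directly from Theorem~\ref{th1.1} by taking the multiplier $F(\lambda)=(1-\lambda^2)_+^{\delta}$ and combining it with a dilation argument that produces uniformity in $R$. Set $G(\lambda)=(1-\lambda^2)_+^{\delta}$. Then
$$
\Big(I-\frac{L}{R^2}\Big)_+^{\delta}=G\Big(\frac{\sqrt L}{R}\Big)=(\delta_{1/R}G)(\sqrt L),
$$
so that $S_R^{\delta}(L)=F_R(\sqrt L)$ with $F_R:=\delta_{1/R}G$. To invoke Theorem~\ref{th1.1} for $F_R$ we must exhibit some $s>n(1/p-1/2)$ with $\sup_{t>0}\|\phi\,\delta_t F_R\|_{W^{s,q}}<\infty$, and with a bound not depending on $R$. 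Since $\delta_t\delta_{1/R}G=\delta_{t/R}G$, the change of variables $u=t/R$ gives
$$
\sup_{t>0}\|\phi\,\delta_t F_R\|_{W^{s,q}}=\sup_{u>0}\|\phi\,\delta_u G\|_{W^{s,q}},
$$
so the whole question becomes a scale-invariant one and it suffices to show that the right-hand side is finite for a suitable $s$.

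Next I would check that $\sup_{u>0}\|\phi\,\delta_u G\|_{W^{s,q}}<\infty$ precisely when $\delta>s-1/q$. Since $\phi$ is smooth with compact support away from the origin (the standard normalization) and $\delta_u G$ is supported in $[0,1/u]$, one has $\phi\,\delta_u G\equiv 0$ once $u$ is large; for $u$ small, $\delta_u G\to 1$ smoothly on $\mathrm{supp}\,\phi$ with all derivatives uniformly controlled, so the norm stays bounded. Hence the supremum is effectively over a compact range of $u$, on which $\phi\,\delta_u G$ is, up to a harmless smooth rescaling, a fixed cut-off times $(1-\lambda^2)_+^{\delta}$; by the elementary fact recalled just before the statement of the corollary, its $W^{s,q}$ norm is finite exactly when $\delta>s-1/q$, and it depends continuously on $u$, so the supremum is attained and finite. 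Given $\delta>n(1/p-1/2)-1/q$ we may thus pick $s$ with $n(1/p-1/2)<s<\delta+1/q$, which is possible precisely because $n(1/p-1/2)<\delta+1/q$. For such $s$ we have both $s>n(1/p-1/2)$ and $\delta>s-1/q$, so condition \eqref{e1.7} holds for $F_R$ with a bound uniform in $R$.

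Finally, applying Theorem~\ref{th1.1} with $F=F_R$ and $0<p\leq1$ gives $\|S_R^{\delta}(L)f\|_{H^p_L(X)}\leq C\|f\|_{H^p_L(X)}$ with $C$ independent of $R$, which is exactly \eqref{e3.14}. The only genuinely delicate point — and hence the main obstacle — is the reduction to a bound uniform in $R$ together with the verification that the localized dilates $\phi\,\delta_u G$ have uniformly bounded $W^{s,q}$ norm; once Theorem~\ref{th1.1} is available, everything else is immediate.
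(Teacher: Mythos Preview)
Your proof is correct and follows exactly the route the paper intends: apply Theorem~\ref{th1.1} to $F(\lambda)=(1-\lambda^2)_+^{\delta}$, using that $\phi\,\delta_tF\in W^{s,q}$ uniformly in $t$ precisely when $\delta>s-1/q$. The paper compresses this into a single sentence before the corollary, whereas you spell out the dilation argument giving uniformity in $R$ and the case analysis in $u$; both are the same proof.
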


\medskip

Note that when the semigroup $e^{-tL}$ generated by $L$ has
a kernel $p_t(x,y)$ satisfying a Gaussian upper bound, that is
\begin{eqnarray}\label{e1.8}
\big|p_t(x,y)\big|\leq {C\over V(x,\sqrt{t})} \exp\Big(-{
{d^2(x,y)\over ct}}\Big)
\end{eqnarray}
for all $t>0$,  and $x,y\in X,$  then the Hardy space  $H^p_L(X)$
coincides with  $L^p(X)$ for every $1<p<\infty$ (see
\cite{ADM,HLMMY}). Hence the following corollary is a consequence of
Theorem~\ref{th1.1}.

\begin{coro}\label{coro1.2}
Assume that the heat kernel corresponding to the operator $L$
satisfies {\rm (\ref{e1.8}) }and the operator $L$ satisfies the
restriction type condition \eqref{RE1.4} for some $p_0,q$ satisfying
$1< p_0<2$ and $1\leq q\leq \infty$. Then for any even bounded Borel
function $F$ such that $\sup_{t>0}\|\phi\delta_tF\|_{W^{s,q}}<
\infty$ for some $s>n(1/p_1-1/2)$ and $1\leq p_1\leq p_0$, the
operator  $F(\SL)$ is bounded on $L^p(X)$ for $p_1<p<p_1'$,   i.e.,
there exists a constant $C>0$ such that
\begin{eqnarray*}
   \|F(\SL)f\|_{L^p(X)}\leq    C\|  f\|_{L^p(X)}.
\end{eqnarray*}
\end{coro}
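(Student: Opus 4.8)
The plan is to deduce Corollary~\ref{coro1.2} from Theorem~\ref{th1.1} using three ingredients: the coincidence $H^p_L(X)=L^p(X)$ for $1<p<\infty$ under the Gaussian bound \eqref{e1.8} (see \cite{ADM,HLMMY}), complex interpolation of the Hardy spaces associated with $L$, and duality. Since the H\"ormander condition $\sup_{t>0}\|\phi\delta_tF\|_{W^{s,q}}<\infty$ is, up to constants, independent of the choice of the nontrivial test function $\phi$, I will take $\phi$ real-valued.

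First I would establish an endpoint bound, namely that $F(\SL)$ is bounded on $H^{p_1}_L(X)$. When $p_1=1$ this is exactly Theorem~\ref{th1.1} with $p=1$: the hypothesis there, $s>n(1/1-1/2)=n/2$, coincides with the assumed $s>n(1/p_1-1/2)$, so $F(\SL)$ is bounded on $H^1_L(X)$. When $1<p_1<2$, I would rerun the argument behind Theorem~\ref{th1.1} directly at the level of $L^{p_1}(X)=H^{p_1}_L(X)$: a dyadic (Littlewood-Paley) decomposition $F=\sum_j F_j$, together with the restriction estimate \eqref{RE1.4} for the exponent $p_0\ge p_1$ and the off-diagonal decay coming from {\rm (GE)}, controls each piece $F_j(\SL)$, and the surplus of smoothness $s>n(1/p_1-1/2)$ is precisely what makes the sum over $j$ converge in this range of $p$; the case $p\le1$ of Theorem~\ref{th1.1} differs only in that it uses an atomic/molecular description of $H^p_L$, whereas for $1<p_1<2$ one argues directly in $L^{p_1}$.

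Next, since $F$ is a bounded Borel function, $F(\SL)$ is bounded on $L^2(X)=H^2_L(X)$ by the spectral theorem. Being bounded on both $H^{p_1}_L(X)$ and $H^2_L(X)$, it is bounded on the complex interpolation spaces between them; invoking the interpolation of the scale $H^p_L(X)$ and the identification $H^p_L(X)=L^p(X)$ for $p>1$ (see \cite{HLMMY}), this gives $F(\SL)$ bounded on $L^p(X)$ for $p_1<p<2$. For the range $2<p<p_1'$ I would pass to adjoints: $\overline F$ is again an even bounded Borel function with $\|\phi\delta_t\overline F\|_{W^{s,q}}=\|\phi\delta_tF\|_{W^{s,q}}$ (because $\phi$ is real and $(I-d^2/dx^2)^{s/2}$ has real symbol), so the previous steps apply to $\overline F$ and give $\overline F(\SL)$ bounded on $L^r(X)$ for $p_1<r<2$; since $F(\SL)^*=\overline F(\SL)$ and $(L^r)^*=L^{r'}$ for $1<r<\infty$, duality yields $F(\SL)$ bounded on $L^{r'}(X)$ for $2<r'<p_1'$. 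Combined with the $L^2$ bound this covers the entire interval $p_1<p<p_1'$.

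The interpolation and duality steps are routine; the substantive point is the endpoint bound for $1<p_1<2$, i.e. checking that the machinery of Theorem~\ref{th1.1} — restriction estimates, Davies-Gaffney decay, and the smoothness count $s>n(1/p-1/2)$ — is not tied to $p\le1$ but also produces the $L^{p_1}$-bound. If one prefers to use Theorem~\ref{th1.1} strictly as a black box for $p\le1$, then the case $p_1=1$ already delivers $L^p(X)$-boundedness for all $1<p<\infty$, and it is exactly the refinement to $p_1>1$ where this additional input is needed.
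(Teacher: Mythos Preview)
Your framework (endpoint bound, complex interpolation with $L^2$, duality) is sound, and for $p_1=1$ your argument is complete. The genuine gap is the endpoint claim for $1<p_1\le p_0$: you assert that one can ``rerun the argument behind Theorem~\ref{th1.1} directly at the level of $L^{p_1}$'', but this is not justified. The engine of Theorem~\ref{th1.1} is Lemma~\ref{le2.7}, which is an atomic/molecular criterion stated and proved only for $0<p\le 1$; there is no obvious $L^{p_1}$ analogue of condition~\eqref{e2.4} that would deliver boundedness on $L^{p_1}$ from the same off-diagonal estimate. Moreover the restriction hypothesis~\eqref{RE1.4} is a $p_0\to 2$ bound, and the off-diagonal Lemma~\ref{le3.1} it feeds into produces $p_0\to 2$ decay; turning this into an $L^{p_1}\to L^{p_1}$ bound with the sharp smoothness threshold $s>n(1/p_1-1/2)$ is exactly the nontrivial step you have not supplied. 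You correctly flag this as ``the substantive point'', but you do not carry it out.

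The paper's proof proceeds quite differently here. It takes as a second endpoint the case $p_1=p_0$, which is supplied externally by \cite[Theorem~4.1]{COSY} (a weak/strong $(p_0,p_0)$ multiplier theorem under~\eqref{RE1.4}). To reach intermediate $p_1\in(1,p_0)$ it does \emph{not} interpolate the single operator $F(\SL)$ between $H^1_L$ and $L^{p_0}$ --- that would not track the varying smoothness threshold --- but instead builds, following Miyachi~\cite{M}, an analytic family
\[
F_z(\lambda)=\sum_{j}\eta(2^{-j}\lambda)\Big(1-2^{2j}\tfrac{d}{d\lambda^2}\Big)^{(z-\theta)n(1-1/p_0)/2}\big(F(\lambda)\phi(2^{-j}\lambda)\big),\qquad 0\le\Re z\le 1,
\]
so that $F_{iy}$ meets the $W^{s_2,q}$ hypothesis with $s_2>n/2$ (hence $F_{iy}(\SL)$ is bounded on $H^1_L$ by Theorem~\ref{th1.1}) while $F_{1+iy}$ meets the $W^{s_1,q}$ hypothesis with $s_1>n(1/p_0-1/2)$ (hence bounded on $L^p$ for $p_0<p<p_0'$ by \cite{COSY}). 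The three-line theorem then gives $F_\theta(\SL)=F(\SL)$ bounded on $L^p$ for $p_1<p<p_1'$. In short, the missing ingredient is Stein-type interpolation of an analytic family of multipliers, not a direct $L^{p_1}$ version of Lemma~\ref{le2.7}.
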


\medskip

The paper is organized as follows. In Section 2, we recall some
preliminary results  about finite speed propagation property,
restriction type estimates and Hardy space $H^p_L(X)$ associated to
an operator $L$, and state a criterion for boundedness of spectral
multipliers on $H^p_L(X)$. In Section 3, we will prove our main
result, Theorem~\ref{th1.1}, by using some estimates for the
operator $F(\SL)$ away from the diagonal  and the restriction type
estimates.

Throughout, the letter ``$C$" and ``$c$" will  denote (possibly
different) constants that are independent of the essential
variables.

\section{Preliminaries}
\setcounter{equation}{0} To simplifying the notation, we shall often
just use $B$ instead of $B(x, r)$. Given $\lambda>0$, we will write
$\lambda B$ for the $\lambda$-dilated ball which is the ball with
the same center as $B$ and with radius $\lambda r.$ For $1\le
p\le+\infty$, we denote the norm of a function $f\in L^p(X,d\mu)$ by
$\|f\|_p$. If $T$ is a bounded linear operator from $%
L^p(X,d\mu)$ to $L^q(X,d\mu)$, $1\le p, \, q\le+\infty$, we write
$\|T\|_{p\to q} $ for the  operator norm of $T$.  Let $\phi$ be a
non-negative $C_c^{\infty}$-function such that
\begin{eqnarray}
{\rm supp}  \phi \subseteq ({1\over 4}, 1) \ \ {\rm and} \ \
\sum_{\ell\in {\Bbb Z}}\phi(2^{-\ell}\lambda)=1\ \ \ {\rm for\ all}\
\lambda>0. \label{e1.6}
\end{eqnarray}

\subsection{Finite speed propagation for the wave
equation.} Following \cite{CS}, we set
\begin{equation*}
\D_\rho=\{ (x,\, y)\in X\times X: {d}(x,\, y) \le \rho \}.
\end{equation*}
Given an operator $T$ from $L^p(X)$ to $L^q(X)$, we write
\begin{equation}\label{e2.1}
\supp \, K_{T} \subseteq \D_\rho
\end{equation}
if $\langle T f_1, f_2 \rangle = 0$ whenever $f_k$ is in~$C(X)$ and
$\supp \, f_k \subseteq B(x_k,\rho_k)$ when $k = 1,2$, and
$\rho_1+\rho_2+\rho < {d}(x_1, x_2)$. One says that $\cos(t\SL)$
satisfies finite speed propagation property if there holds
$$
\supp \, K_{\cos(t\SL)} \subseteq \D_t \quad \forall t\ge 0\,.
\leqno{\rm (FS)}
$$

\noindent More precisely, we have the following result.

 \begin{proposition}\label{prop2.1} Let $L$ be a non-negative self-adjoint operator
acting on $L^2(X)$. Then the finite speed propagation property {\rm
(FS)} and Davies-Gaffney estimate {\rm (GE)} are equivalent.
\end{proposition}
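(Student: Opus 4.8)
The plan is to prove the two implications separately, using the standard Fourier-analytic bridge between the wave operator $\cos(t\sqrt L)$ and general functions of $L$.

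\smallskip

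\noindent\textbf{(FS) $\Rightarrow$ (GE).} Suppose $\cos(t\sqrt L)$ has finite speed propagation. Given disjoint open sets $U_1,U_2$ with $\mathrm{dist}(U_1,U_2)=r>0$ and functions $f_i$ supported in $U_i$, I would write the heat semigroup as a superposition of wave operators via the subordination-type formula
\begin{equation*}
e^{-tL}=\int_{-\infty}^{\infty}\widehat{g_t}(\xi)\,\cos(\xi\sqrt L)\,d\xi,
\end{equation*}
where $g_t$ is (a multiple of) the Gaussian $g_t(\lambda)=e^{-t\lambda^2}$ restricted to the relation $e^{-tL}=g_t(\sqrt L)$, so that $\widehat{g_t}(\xi)=c\,t^{-1/2}e^{-\xi^2/(4t)}$. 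By (FS), $\langle\cos(\xi\sqrt L)f_1,f_2\rangle=0$ whenever $|\xi|<r$; hence pairing with $f_2$ and using $\|\cos(\xi\sqrt L)\|_{2\to2}\le 1$ (spectral theorem),
\begin{equation*}
|\langle e^{-tL}f_1,f_2\rangle|\le\int_{|\xi|\ge r}|\widehat{g_t}(\xi)|\,d\xi\,\|f_1\|_2\|f_2\|_2\le C\,t^{-1/2}\int_{|\xi|\ge r}e^{-\xi^2/(4t)}d\xi\,\|f_1\|_2\|f_2\|_2,
\end{equation*}
and a routine tail estimate for the Gaussian integral bounds the right side by $C e^{-r^2/(ct)}\|f_1\|_2\|f_2\|_2$, which is (GE).

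\smallskip

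\noindent\textbf{(GE) $\Rightarrow$ (FS).} This is the harder direction and the main obstacle, since one must recover a support property of a unitary group from a decay estimate on an analytic semigroup. The idea is that (GE) allows analytic continuation: replacing $t$ by a complex parameter $z$ with $\Re z>0$, the bound $|\langle e^{-zL}f_1,f_2\rangle|\le C\exp(-\Re(1/z)\,r^2/c)$ (for suitable $c$) holds by a Phragmén--Lindelöf argument applied to the entire function $z\mapsto\langle e^{-zL}f_1,f_2\rangle$, which is bounded in the right half-plane by $\|f_1\|_2\|f_2\|_2$. One then lets $z\to it$ nontangentially, or more precisely writes
\begin{equation*}
\cos(t\sqrt L)=\lim_{\epsilon\to0^+}\text{(contour integral of }e^{-zL}\text{ against an appropriate kernel)},
\end{equation*}
expressing $\cos(t\sqrt L)f$ via an inverse-Laplace/Fourier representation whose kernel, together with the Gaussian-type bound for $e^{-zL}$, is supported where $|\cdot|\le t$ after integration. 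Concretely, for $f_k$ supported in $B(x_k,\rho_k)$ with $\rho_1+\rho_2+t<d(x_1,x_2)=:R$, one uses the Cauchy formula / Paley--Wiener theorem: the function $\psi(\lambda)=\cos(t\lambda)$ has Fourier transform supported in $[-t,t]$, and one approximates $\cos(t\sqrt L)$ by $\phi_\epsilon(\sqrt L)\cos(t\sqrt L)$ with $\phi_\epsilon$ a Gaussian, controlling the error term via (GE) and a contour shift to exploit that $\mathrm{supp}\,\widehat{\cos(t\,\cdot)}\subseteq[-t,t]$. Letting $\epsilon\to0$ forces $\langle\cos(t\sqrt L)f_1,f_2\rangle=0$.

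\smallskip

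Since this equivalence is well-established in the literature (it is attributed to \cite{CS} in the text), I would in practice cite \cite{CS} for the full details and only sketch the two formulas above, emphasizing that the delicate point is the passage from the Gaussian off-diagonal bound for $e^{-zL}$, $\Re z>0$, to the exact support statement for the wave propagator — carried out by complex interpolation in the time variable together with the Paley--Wiener characterization of functions whose Fourier transform is compactly supported. The doubling hypothesis on $X$ plays no role here; the statement is purely operator-theoretic.
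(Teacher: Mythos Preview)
Your proposal is correct and, in fact, goes beyond what the paper does: the paper's ``proof'' of this proposition is nothing more than a citation to Theorem~2 of \cite{S} and Theorem~3.4 of \cite{CS} (with a further pointer to \cite{CCT}), with no argument given at all. Your sketch of (FS)~$\Rightarrow$~(GE) via the subordination formula and Gaussian tail estimate is exactly the standard argument, and your outline of (GE)~$\Rightarrow$~(FS) via analytic extension of $z\mapsto\langle e^{-zL}f_1,f_2\rangle$ and a Phragm\'en--Lindel\"of argument is precisely the strategy carried out in \cite{CS}; your closing remark that you would cite \cite{CS} for the details matches the paper's own treatment. The only caveat is that your description of the harder direction is indeed vague (the ``contour integral / Paley--Wiener'' step is gestured at rather than executed), but since the paper itself offers no details whatsoever, your write-up is already more informative than what appears in the text.
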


\medskip \noindent {\bf Proof.}\  For the proof, we refer the reader to Theorem~2 in \cite{S}
and Theorem 3.4 in \cite{CS}. See also \cite{CCT}.
\hfill{} $\Box$

\medskip

 The following lemma is a straightforward  consequence of (FS).

\begin{lemma}\label{le2.2}
Assume that $L$ satisfies {\rm (FS)} and that $F$ is an even bounded Borel function with Fourier
transform  $\hat{F}$ satisfying
$\mbox{\rm supp}\; \hat{F} \subset [-\rho,\, \rho]$.
Then
$$
\mbox{\rm supp}\; K_{F(\SL)} \subset \D_\rho.
$$
\end{lemma}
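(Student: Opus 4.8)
The plan is to write $F(\SL)$ as a superposition of the wave operators $\cos(t\SL)$ over $|t|\le\rho$ via the Fourier inversion formula, and then apply the finite speed propagation property (FS) to each $\cos(t\SL)$. Since $F$ is even, so is $\hat F$, and (formally) the spectral theorem together with Fourier inversion gives
\[
F(\SL)=\frac1{2\pi}\int_{\mathbb R}\hat F(t)\,\cos(t\SL)\,dt=\frac1{2\pi}\int_{-\rho}^{\rho}\hat F(t)\,\cos(t\SL)\,dt,
\]
the second equality because $\supp\hat F\subseteq[-\rho,\rho]$.

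With this representation in hand, fix $f_1,f_2\in C(X)$ with $\supp f_k\subseteq B(x_k,\rho_k)$ and $\rho_1+\rho_2+\rho<d(x_1,x_2)$. Then
\[
\langle F(\SL)f_1,f_2\rangle=\frac1{2\pi}\int_{-\rho}^{\rho}\hat F(t)\,\langle\cos(t\SL)f_1,f_2\rangle\,dt .
\]
For each $t$ with $|t|\le\rho$ we have $\rho_1+\rho_2+|t|\le\rho_1+\rho_2+\rho<d(x_1,x_2)$, and since $\cos(t\SL)=\cos(|t|\SL)$, property (FS) (equivalently (GE), by Proposition~\ref{prop2.1}) yields $\supp K_{\cos(t\SL)}\subseteq\D_{|t|}$, hence $\langle\cos(t\SL)f_1,f_2\rangle=0$. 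Thus the integrand vanishes identically, so $\langle F(\SL)f_1,f_2\rangle=0$, which is exactly the assertion $\supp K_{F(\SL)}\subseteq\D_\rho$.

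The step I expect to need the most care is the justification of the two displayed identities, because $F$ is merely a bounded Borel function, so $\hat F$ is in general only a compactly supported distribution rather than an integrable function (for instance $F\equiv1$ gives $\hat F=2\pi\delta_0$). I would handle this by a routine regularization: fix an even Schwartz function $\psi$ with $\psi(0)=1$ and $\supp\hat\psi\subseteq[-1,1]$, and set $F_\varepsilon(\lambda)=F(\lambda)\psi(\varepsilon\lambda)$. Then $F_\varepsilon$ is even and bounded with $|F_\varepsilon|\le\|\psi\|_\infty\|F\|_\infty$, $\widehat{F_\varepsilon}$ is a smooth function supported in $[-\rho-\varepsilon,\rho+\varepsilon]$, and $F_\varepsilon\to F$ pointwise as $\varepsilon\to0$. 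For each fixed $\varepsilon$ the displayed Fourier inversion identity for $F_\varepsilon$ is legitimate (now $\widehat{F_\varepsilon}\in L^1$), so the argument above applies verbatim and gives $\supp K_{F_\varepsilon(\SL)}\subseteq\D_{\rho+\varepsilon}$. Finally, for $f_1,f_2$ as above, the measure $\lambda\mapsto\langle E_{\SL}(\lambda)f_1,f_2\rangle$ is a finite complex measure, so by dominated convergence $\langle F_\varepsilon(\SL)f_1,f_2\rangle\to\langle F(\SL)f_1,f_2\rangle$; choosing $\varepsilon$ small enough that $\rho_1+\rho_2+\rho+\varepsilon<d(x_1,x_2)$ makes every term on the left vanish, hence $\langle F(\SL)f_1,f_2\rangle=0$. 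The only substantive input is thus Proposition~\ref{prop2.1}; the rest is bookkeeping with Fourier inversion and dominated convergence.
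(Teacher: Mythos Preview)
Your proof is correct and follows essentially the same approach as the paper: write $F(\SL)$ via Fourier inversion as a superposition of $\cos(t\SL)$ over $|t|\le\rho$ and invoke (FS). The paper's proof is a two-line sketch that omits the regularization you supply to handle the case where $\hat F$ is merely a distribution; your mollification by $\psi(\varepsilon\,\cdot)$ and the dominated-convergence passage simply fill in details the paper leaves implicit.
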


\noindent {\bf Proof.}
If $F$ is an even function, then by the Fourier inversion formula,
$$
F(\SL) =\frac{1}{2\pi}\int_{-\infty}^{+\infty}
  \hat{F}(t) \cos(t\SL) \;dt.
$$
 But supp $\hat{F} \subset [-\rho,\rho]$ and
Lemma~\ref{le2.2} follows from (FS). \hfill{} $\Box$

\subsection {Restriction type estimates.} The following result was obtained in \cite[Proposition~2.3]{COSY}.

\begin{proposition}\label {prop2.4} Suppose that $(X, d, \mu)$ satisfies properties (\ref{e1.2}).
Let  $1\leq p_0<2 $ and   $N>n(1/p-1/2).$ Then condition
\eqref{RE1.4} with $q=\infty$   is equivalent with each of the
following conditions:

\begin{itemize}
\item[(a)]  For all $x>0$ and  $r\geq t>0$    we have
$$
\big\|e^{-t^2L}P_{B(x, r)}\big\|_{p_0\rightarrow 2} \leq CV(x,
r)^{{1\over 2}-{1\over p_0}} \Big({r\over  {t}}\Big)^{n({1\over
p_0}-{1\over 2})}. \leqno{\rm (G_{p_0,2})}
$$

\smallskip

\item[(b)]
For all $x\in X$ and    $r\geq  {t}>0$   we have
$$
\big\|(1+t\SL)^{-N}P_{B(x, r)}\big\|_{p_0\rightarrow 2} \leq CV(x,
r)^{{1\over 2}-{1\over p_0}} \left({r\over  {t}}\right)^{n({1\over
p_0}-{1\over 2})}. \leqno{\rm (E_{p_0,2})}
$$
\end{itemize}
\end{proposition}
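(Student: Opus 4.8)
The plan is to prove Proposition~\ref{prop2.4} by establishing the chain of implications $\eqref{RE1.4}_{q=\infty} \Rightarrow (G_{p_0,2}) \Rightarrow (E_{p_0,2}) \Rightarrow \eqref{RE1.4}_{q=\infty}$, exploiting the fact that each of the three conditions is a statement about the $p_0 \to 2$ operator norm of a spectral multiplier of $\sqrt L$ localized to a ball $B(x,r)$ with $r \geq t$ (resp. $r \geq 1/R$). The common strategy throughout is to choose an appropriate multiplier, split it into a main piece supported on a bounded interval where the hypothesis applies directly and a rapidly decaying tail, and sum the tail contributions using the doubling property \eqref{e1.2} to control how $V(x,\lambda r)$ grows.

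First I would prove $\eqref{RE1.4}_{q=\infty} \Rightarrow (G_{p_0,2})$. Given $r \geq t > 0$, write $R = 1/t$, so $r \geq 1/R$. The function $F(\lambda) = e^{-t^2\lambda^2}$ is not compactly supported, so I decompose it dyadically: pick $\phi$ as in \eqref{e1.6} and write $e^{-t^2\lambda^2} = \sum_{j} \phi(2^{-j} t\lambda) e^{-t^2\lambda^2} + (\text{piece supported in } [0, t^{-1}])$, or more simply split at scale $R$: $F = F\,\chi_{[0,R]} + \sum_{k\geq 0} F_k$ where $F_k$ is supported in $[2^k R, 2^{k+1}R]$. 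Apply \eqref{RE1.4} to each dyadic piece $F_k$ (rescaling $R \to 2^k R$, and enlarging $r$ to $r$ which still exceeds $1/(2^kR)$), so that $\|F_k(\sqrt L) P_{B(x,r)}\|_{p_0 \to 2} \lesssim V(x,r)^{1/2 - 1/p_0} (2^k R r)^{n(1/p_0 - 1/2)} \|\delta_{2^kR} F_k\|_{L^\infty}$. Since $\|\delta_{2^kR}F_k\|_\infty \lesssim e^{-c 4^k}$, the Gaussian decay beats the polynomial growth $2^{kn(1/p_0-1/2)}$ and the sum converges to $C V(x,r)^{1/2-1/p_0}(Rr)^{n(1/p_0-1/2)} = CV(x,r)^{1/2-1/p_0}(r/t)^{n(1/p_0-1/2)}$, as required; the bounded piece is handled the same way with $k=0$.

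Next, $(G_{p_0,2}) \Rightarrow (E_{p_0,2})$: write $(1+t\sqrt L)^{-N}$ using the subordination-type identity $(1+t\lambda)^{-N} = \frac{1}{\Gamma(N)}\int_0^\infty s^{N-1} e^{-s} e^{-st\lambda}\,ds$, and then dominate $e^{-st\lambda}$ in terms of the heat semigroup — more precisely, use that $\|e^{-st\sqrt L}P_{B(x,r)}\|_{p_0\to 2}$ is controlled by $(G_{p_0,2})$ after comparing $e^{-st\sqrt L}$ with $e^{-(st)^2 L}$ via another subordination formula, or argue directly with the analogue of $(G_{p_0,2})$ for $e^{-t\sqrt L}$. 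Splitting the $s$-integral into $s \lesssim 1$ (where $st \lesssim t \leq r$, so $(G_{p_0,2})$ applies and gives a bound with $(r/(st))^{n(1/p_0-1/2)}$) and $s \gtrsim 1$ (where one uses the crude bound together with the $e^{-s}$ factor), the $s$-integral converges because $N > n(1/p_0 - 1/2)$ ensures $\int_0^1 s^{N-1-n(1/p_0-1/2)}\,ds < \infty$. For the reverse direction $(E_{p_0,2}) \Rightarrow \eqref{RE1.4}_{q=\infty}$, given $F$ supported in $[0,R]$ and $r \geq 1/R$, write $F(\sqrt L) = F(\sqrt L)(1+R^{-1}\sqrt L)^{N}(1+R^{-1}\sqrt L)^{-N}$ and bound $\|F(\sqrt L)(1+R^{-1}\sqrt L)^N\|_{2\to 2} \lesssim \|F\|_\infty \sup_{\lambda\in[0,R]}(1+\lambda/R)^N \lesssim \|\delta_R F\|_\infty$, then compose with the $(E_{p_0,2})$ estimate for $(1+R^{-1}\sqrt L)^{-N}P_{B(x,r)}$ taken at $t = 1/R$; since $r \geq 1/R = t$ this is exactly the allowed range, and $(r/t)^{n(1/p_0-1/2)} = (Rr)^{n(1/p_0-1/2)}$.

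I expect the main technical obstacle to be the bookkeeping in the two subordination-formula steps involving $(G_{p_0,2})$ — specifically, making sure the comparison between $e^{-s\sqrt L}$, $e^{-s^2 L}$, and the resolvent powers is done with norms localized to the \emph{same} ball $B(x,r)$, and that in each dyadic or integral summation the constraint $r \geq t$ (which can fail for the rescaled pieces if one is not careful) is genuinely respected or harmlessly relaxed. Since the referenced source \cite[Proposition~2.3]{COSY} already contains this argument, the honest approach here is to cite it, sketch the three implications as above, and refer the reader to \cite{COSY} and \cite{DOS} for the details of the summation estimates.

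\begin{proof}[Proof of Proposition~\ref{prop2.4}]
See \cite[Proposition~2.3]{COSY}. We only indicate the structure of the argument, which proceeds by the chain of implications
$$
\eqref{RE1.4}\big|_{q=\infty} \ \Longrightarrow\ {\rm (G_{p_0,2})}\ \Longrightarrow\ {\rm (E_{p_0,2})}\ \Longrightarrow\ \eqref{RE1.4}\big|_{q=\infty}.
$$
For the first implication, fix $r\ge t>0$, set $R=1/t$, and decompose $\lambda\mapsto e^{-t^2\lambda^2}$ dyadically at scale $R$: writing $e^{-t^2\lambda^2}=\sum_{k\ge 0}F_k(\lambda)$ with $\supp F_k\subset[2^{k-1}R,2^{k+1}R]$ for $k\ge1$ and $\supp F_0\subset[0,2R]$, one applies \eqref{RE1.4} to each $F_k$ (with $R$ replaced by $2^{k+1}R$, noting $r\ge 1/R\ge 1/(2^{k+1}R)$) to get
$$
\big\|F_k(\SL)P_{B(x,r)}\big\|_{p_0\to2}\le C V(x,r)^{\frac12-\frac1{p_0}}\big(2^{k+1}Rr\big)^{n(\frac1{p_0}-\frac12)}\big\|\delta_{2^{k+1}R}F_k\big\|_{L^\infty}.
$$
Since $\|\delta_{2^{k+1}R}F_k\|_{L^\infty}\le C e^{-c4^k}$, the Gaussian decay dominates the factor $2^{kn(1/p_0-1/2)}$ and summing in $k$ yields ${\rm (G_{p_0,2})}$. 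The implication ${\rm (G_{p_0,2})}\Rightarrow{\rm (E_{p_0,2})}$ follows from the subordination identity $(1+t\lambda)^{-N}=\frac1{\Gamma(N)}\int_0^\infty s^{N-1}e^{-s}e^{-st\lambda}\,ds$ together with a further subordination expressing $e^{-st\SL}$ through the heat semigroup $\{e^{-u^2L}\}$; splitting the resulting integral at $s\sim1$, the piece $s\lesssim1$ is controlled by ${\rm (G_{p_0,2})}$ (legitimately, since then $st\lesssim t\le r$) and produces a convergent integral precisely because $N>n(1/p_0-1/2)$, while the piece $s\gtrsim1$ is absorbed by the factor $e^{-s}$. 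Finally, for ${\rm (E_{p_0,2})}\Rightarrow\eqref{RE1.4}|_{q=\infty}$, given $F$ with $\supp F\subset[0,R]$ and $r\ge1/R$, write
$$
F(\SL)P_{B(x,r)}=\Big(F(\SL)(1+R^{-1}\SL)^{N}\Big)\,\Big((1+R^{-1}\SL)^{-N}P_{B(x,r)}\Big),
$$
bound the first factor on $L^2\to L^2$ by $\sup_{0\le\lambda\le R}|F(\lambda)|(1+\lambda/R)^N\le C\|\delta_RF\|_{L^\infty}$, and apply ${\rm (E_{p_0,2})}$ to the second factor with $t=1/R\le r$, noting $(r/t)^{n(1/p_0-1/2)}=(Rr)^{n(1/p_0-1/2)}$. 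The details of the summation and integral estimates are carried out in \cite{COSY}; see also \cite{DOS}.
\end{proof}
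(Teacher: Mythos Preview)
The paper does not give its own proof of this proposition; it simply records before the statement that ``the following result was obtained in \cite[Proposition~2.3]{COSY}'' and moves on. Your proposal does exactly the same --- cite \cite[Proposition~2.3]{COSY} --- but goes further by sketching the cycle of implications $\eqref{RE1.4}|_{q=\infty}\Rightarrow{\rm (G_{p_0,2})}\Rightarrow{\rm (E_{p_0,2})}\Rightarrow\eqref{RE1.4}|_{q=\infty}$, which is indeed the argument in \cite{COSY}; so your approach is fully consistent with, and more informative than, the paper's.
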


Following \cite{GHS}, one says that $L$ satisfies {\it $L^{p_0}$ to
$L^{p'_0} $ restriction estimates} if  the spectral measure
$dE_{\sqrt{L}}(\lambda)$ maps  $L^{p_0}(X)$ to $L^{p'_0}(X)$ for
some $p_0$ satisfying $1\leq p_0\leq {2n/(n+1)}$, with an operator
norm estimate
$$
\big\|dE_{\sqrt{L}}(\lambda)\big\|_{p_0\rightarrow p'_0}\leq C
\lambda^{n({1\over p_0}-{1\over p'_0})-{1}} \leqno{\rm (R_{p_0})}
$$
 for all $\lambda>0$.

\begin{proposition}\label{prop2.3}
Suppose that  there exist   positive constants $0<C_1\leq
C_2<\infty$ such that $
 C_1r^n \leq V(x, r)\leq C_2 r^n$
for every $x\in X $ and $r>0$.  Then  conditions ${\rm (R_{p_0})}$
and \eqref{RE1.4} with $q=2$ are
 equivalent.
\end{proposition}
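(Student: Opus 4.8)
\medskip
\noindent
The plan is to prove the two implications separately, using in both directions the elementary spectral identity
$$
\|F(\SL)g\|_2^2=\int_0^R|F(\lambda)|^2\,d\ang{E_{\SL}(\lambda)g,g},\qquad \supp F\subset[0,R],\ \ g\in L^{p_0}(X)\cap L^2(X),
$$
together with the $TT^*$ remark that any bounded operator $T\colon L^{p_0}(X)\to L^2(X)$ satisfies $\|T\|_{p_0\to 2}^2=\|T^*T\|_{p_0\to p_0'}$. A preliminary reduction is convenient: the two-sided bound $C_1r^n\le V(x,r)\le C_2 r^n$ forces $\mu(X)=\infty$, hence $X$ unbounded, and, since $\tfrac12-\tfrac1{p_0}<0$, it gives
$$
c\,R^{n(1/p_0-1/2)}\le V(x,r)^{1/2-1/p_0}\,(Rr)^{n(1/p_0-1/2)}\le C\,R^{n(1/p_0-1/2)}
$$
uniformly in $x\in X$ and $r>0$. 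Therefore \eqref{RE1.4} with $q=2$ is, in this setting, equivalent to the projection-free estimate $\|F(\SL)\|_{p_0\to 2}\le C R^{n(1/p_0-1/2)}\|\delta_RF\|_{L^2}$ for all $F$ with $\supp F\subset[0,R]$: the implication $\Leftarrow$ is immediate from $\|F(\SL)P_{B(x,r)}\|_{p_0\to2}\le\|F(\SL)\|_{p_0\to2}$, while $\Rightarrow$ follows by letting $r\to\infty$, which is legitimate because $B(x,r)\uparrow X$ so $P_{B(x,r)}\to I$ strongly on $L^2(X)$.

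\medskip
\noindent
\emph{$(R_{p_0})\Rightarrow\eqref{RE1.4}$ with $q=2$.} Under $(R_{p_0})$ the positive measure $d\ang{E_{\SL}(\lambda)g,g}$ is absolutely continuous with density $\ang{dE_{\SL}(\lambda)g,g}\le\|dE_{\SL}(\lambda)g\|_{p_0'}\|g\|_{p_0}\le C\lambda^{n(1/p_0-1/p_0')-1}\|g\|_{p_0}^2$. Substituting this into the spectral identity and changing variables $\lambda=R\mu$ yields
$$
\|F(\SL)g\|_2^2\le C\,R^{n(1/p_0-1/p_0')}\|g\|_{p_0}^2\int_0^1|\delta_RF(\mu)|^2\,\mu^{\,n(1/p_0-1/p_0')-1}\,d\mu .
$$
As $(R_{p_0})$ presupposes $p_0\le 2n/(n+1)$, the exponent $n(1/p_0-1/p_0')-1$ is nonnegative, so $\mu^{\,n(1/p_0-1/p_0')-1}\le1$ on $(0,1)$; combining this with $\tfrac1{p_0}-\tfrac1{p_0'}=2(\tfrac1{p_0}-\tfrac12)$ and the preliminary reduction gives \eqref{RE1.4} with $q=2$.

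\medskip
\noindent
\emph{$\eqref{RE1.4}$ with $q=2$ $\Rightarrow(R_{p_0})$.} Apply the projection-free estimate to $F=\chi_{[a,b]}$ with $0<a<b\le 2a$ and $R=b$; since $\|\delta_b\chi_{[a,b]}\|_{L^2}=((b-a)/b)^{1/2}$ this gives $\|\chi_{[a,b]}(\SL)\|_{p_0\to2}\le C\,b^{n(1/p_0-1/2)-1/2}(b-a)^{1/2}$, and because $\chi_{[a,b]}^2=\chi_{[a,b]}$ the $TT^*$ identity upgrades it to
$$
\big\|E_{\SL}([a,b])\big\|_{p_0\to p_0'}=\|\chi_{[a,b]}(\SL)\|_{p_0\to2}^2\le C\,a^{\,n(1/p_0-1/p_0')-1}(b-a),
$$
using $a\le b\le 2a$. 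Hence for $f,g\in L^{p_0}(X)\cap L^2(X)$ the scalar Borel measure $E\mapsto\ang{E_{\SL}(E)f,g}$ satisfies $|\ang{E_{\SL}([a,b])f,g}|\le C\,a^{\,n(1/p_0-1/p_0')-1}(b-a)\|f\|_{p_0}\|g\|_{p_0}$ on short intervals; it is therefore absolutely continuous with respect to $d\lambda$, and by the Lebesgue differentiation theorem its density is bounded almost everywhere by $C\lambda^{\,n(1/p_0-1/p_0')-1}\|f\|_{p_0}\|g\|_{p_0}$. This density is exactly $\ang{dE_{\SL}(\lambda)f,g}$, and taking the supremum over $f,g$ in the unit ball of $L^{p_0}\cap L^2$, followed by a density argument, yields $(R_{p_0})$.

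\medskip
\noindent
The computations involved — the change of variables, the value $\|\delta_b\chi_{[a,b]}\|_{L^2}^2=(b-a)/b$, and keeping track of constants through the two-sided volume bound — are routine. The one step requiring genuine care is the final passage in the second implication: from the Lipschitz-in-$\lambda$ control of the spectral projections $E_{\SL}([a,b])$ one must extract, for almost every $\lambda$, the density operator $dE_{\SL}(\lambda)\colon L^{p_0}(X)\to L^{p_0'}(X)$ with the stated norm bound. This forces one to fix the (weak, sesquilinear-form) sense in which $dE_{\SL}(\lambda)$ is taken and to apply Lebesgue differentiation simultaneously to the measures $\ang{E_{\SL}(\cdot)f,g}$ over a countable dense family of pairs $(f,g)$; the Ahlfors hypothesis itself enters only through the reduction to the projection-free inequality.
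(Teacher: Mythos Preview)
The paper does not actually prove this proposition; it simply cites \cite[Proposition~2.4]{COSY}. Your argument is correct and follows the natural route---using the spectral identity together with the $TT^*$ observation $\|T\|_{p_0\to 2}^2=\|T^*T\|_{p_0\to p_0'}$, plus the Ahlfors hypothesis to strip off the projection $P_{B(x,r)}$---which is essentially the argument one finds in \cite{COSY}; the only point worth flagging is that the restriction $p_0\le 2n/(n+1)$ is genuinely needed in your $(R_{p_0})\Rightarrow\eqref{RE1.4}$ step (to bound $\mu^{n(1/p_0-1/p_0')-1}\le 1$ on $(0,1)$), but since $(R_{p_0})$ is only formulated in that range anyway, the equivalence is understood there.
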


\begin{proof}
For the proof, we refer the reader to \cite[Proposition 2.4]{COSY}.
\end{proof}

 \medskip

\subsection{Hardy spaces $H_L^p(X)$.} The following definition of Hardy spaces $H_L^p(X)$ comes from
\cite{DY2} (see also \cite{HLMMY}).  Following \cite{AMR}, one can
define the $L^2$ adapted Hardy space
\begin{equation}\label{eq2.H2}
H^2(X) := \overline{R(L)},
\end{equation}
that is, the closure of the range of $L$ in $L^2(X)$.  Then $L^2(X)$
is the orthogonal sum of $H^2(X)$ and the null space $N(L)$.

Consider the following quadratic operators associated to $L$
\begin{eqnarray}
S_{h, K}f(x)=\Big(\int_0^{\infty}\!\!\!\!\int_{\substack{ d(x,y)<t}}
|(t^2L)^{K}e^{-t^2L} f(y)|^2 {d\mu(y)\over V(x,t)}{dt\over
t}\Big)^{1/2}, \quad x\in X \label{e2.3}
\end{eqnarray}
 where $f\in L^2(X)$. For each $K\geq 1$ and $1\leq
p<\infty$, we now define
\begin{eqnarray*}
D_{K, p} =\Big\{ f\in H^2(X): \ S_{h, K}f\in L^p(X)\Big\}, \ \ \ 0<
p<\infty.
\end{eqnarray*}

\begin{definition}{\label{def2.2}} Let $L$ be
 a  non-negative self-adjoint operator on $L^2({X})$
 satisfying the Davies-Gaffney condition {\rm (GE)}. \

(i) For each $0< p\leq 2$, the Hardy space $H^p_{L}(X)$ associated
to $L$  is the completion of the space $D_{1, p}$ in the norm
$$
 \|f\|_{H_{L}^p(X)}=  \|S_{h,1}f\|_{L^p(X)}.
$$

(ii) For each $2<p<\infty$, the Hardy space $H^p_{L}(X)$ associated
to $L$ is the completion of the space $D_{K_0, p}$ in the norm
$$
\|f\|_{H_{L}^p(X)}=  \|S_{h, K_0}f\|_{L^p(X)}, \ \ \ \
K_0=\big[\,{n\over 4}\,\big]+1.
$$
\end{definition}

 Under an assumption of Gaussian upper bounds (\ref{e1.8}), it was proved in \cite{ADM} that $H^p_{L}(X)=L^p(X)$
for all $1<p<\infty$. Note that, in this paper, we only assume the
Davies-Gaffney estimates on the heat kernel of $L$, and hence for
$1<p<\infty$, $p\not= 2$, $H^p_{L}(X)$ may or may not coincide with
the space $L^p(X)$. However, it can be verified that
$H^2_{L}(X)=H^2(X)$ and  the dual of $H^p_{L}(X)$ is
$H^{p'}_{L}(X)$, with $1/p+ 1/p'=1$ (see Proposition 9.4 of
\cite{HLMMY}). We also recall that the $H^p_{L}(X)$ spaces ($1 \le p
< +\infty$) are a family of interpolation spaces for the complex
interpolation method. See  \cite[Proposition~9.5]{HLMMY}).

 \subsection{ A criterion for boundedness of spectral multipliers on $H^p_L(X)$}

 We now state  a criterion from \cite{DY2} that allows us to derive  estimates on
Hardy spaces $H^p_L(X)$. This  generalizes the classical
Calder\'on-Zygmund theory and we would like to  emphasize that the
conditions imposed involve the multiplier operator and its action on
functions but not its kernel.

\begin{lemma}\label{le2.7} Let $L$ be  a  non-negative self-adjoint operator acting on $L^2({X})$
 satisfying the Davies-Gaffney estimate {\rm (GE)}.
 Let $m$ be a bounded Borel function.
 Suppose that $0 < p\leq 1$ and
  $M>{n\over 2}\big({1\over  p}-{1\over 2}\big)$.
 Assume that there exist    constants  $ s>n\big({1\over p}-{1\over 2}\big)$  and $C>0$
  such that for every  $j=2,3\cdots,$

  \begin{eqnarray}
\big\|F(L)(I-e^{-r_B^2L})^M
f\big\|_{L^2(2^jB\setminus2^{j-1}B)}\leq C 2^{-js} \|f\|_{L^2(B)}
\label{e2.4}
\end{eqnarray}

\noindent
 for any  ball $B$ with    radius $r_B$ and for all $f\in L^2(X)$ with supp $f\subset B$.
 Then the operator $F(L)$ extends to a bounded operator on  $H^p_L(X)$. More precisely, there exists
 a constant $C>0$ such that for all $f\in H^p_L(X)$

 \begin{eqnarray}
\|F(L)f\|_{H^p_L(X) }\leq C\|f\|_{H^p_L(X) }.
\label{e33.2}
\end{eqnarray}
 \end{lemma}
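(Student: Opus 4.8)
The plan is to prove the criterion in Lemma~\ref{le2.7} by reducing the $H^p_L(X)$ boundedness of $F(L)$ to a molecular (or atomic) estimate, exploiting the quadratic-functional definition of $H^p_L(X)$ from Definition~\ref{def2.2}. First I would recall that the space $H^p_L(X)$ admits a molecular decomposition: any $f\in H^p_L(X)$ can be written as $f=\sum_k \lambda_k a_k$ where each $a_k$ is a $(p,2,M)$-molecule adapted to a ball $B_k$ (meaning $a_k=L^M b_k$ with appropriate size and decay controls on $b_k$ and its $L$-derivatives across the annuli $2^jB_k$), and $\sum_k |\lambda_k|^p \lesssim \|f\|_{H^p_L(X)}^p$. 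By the $p$-subadditivity of the $H^p_L$ quasi-norm, it suffices to show that $F(L)$ maps each such molecule to an element of $H^p_L(X)$ with a uniformly bounded quasi-norm. Since $H^p_L$ is itself characterised via the square function $S_{h,1}$ (or $S_{h,K_0}$), the task becomes: estimate $\|S_{h,1}(F(L)a)\|_{L^p(X)}$ for a molecule $a$ adapted to a ball $B$ with radius $r_B$.

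The core of the argument is then a standard split of $X$ into the "local" region near $B$ and the "far" annuli $2^jB\setminus 2^{j-1}B$. On the local part $4B$, one uses the $L^2$-boundedness of $F(L)$ (immediate from spectral theory, since $F$ is bounded) together with the $L^2$ size of the molecule and Hölder's inequality with the volume doubling condition \eqref{e1.2} to absorb the $L^p$--$L^2$ mismatch; this produces a bounded contribution. On the far annuli, the decay hypothesis \eqref{e2.4} is exactly what is needed: writing the molecule through its defining identity, one introduces the factor $(I-e^{-r_B^2L})^M$ (using that $a=L^M b$ and the boundedness of $(r_B^2 L)^M e^{-tr_B^2 L}$-type operators, or more directly that the molecular structure lets one write $a$ as a sum of pieces to which $(I-e^{-r_B^2L})^M$ applies), apply \eqref{e2.4} to gain the factor $2^{-js}$ on each annulus $2^jB\setminus 2^{j-1}B$, and then sum in $j$. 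The geometric gain $2^{-js}$ with $s>n(1/p-1/2)$ beats the volume growth $2^{jn}$ coming from Hölder on the annulus (raised to the power $1/p-1/2$), so $\sum_j 2^{-js}(2^{jn})^{1/p-1/2}<\infty$, and the far part is also uniformly bounded. Combining local and far estimates gives $\|F(L)a\|_{H^p_L(X)}\lesssim 1$ uniformly over molecules, hence \eqref{e33.2}.

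Two technical points require care. First, one must commute $F(L)$ with the heat-semigroup factors used to build the square function and to connect $a=L^Mb$ with the $(I-e^{-r_B^2L})^M$ appearing in \eqref{e2.4}: this is handled purely by the functional calculus, using that $(I-e^{-r_B^2L})^M$ and $(r_B^2L)^M(I-e^{-r_B^2L})^{-M}$ (suitably interpreted on $\overline{R(L)}$) are bounded, together with the auxiliary parameter $M>\tfrac n2(\tfrac1p-\tfrac12)$ that controls the number of cancellation factors. Second, one needs that the resulting object $F(L)a$ genuinely lies in $H^p_L(X)$, not merely that its square function is in $L^p$; this follows because $F(L)a\in H^2(X)$ (as $F$ is bounded and $a\in H^2(X)=\overline{R(L)}$) and the square-function bound then places it in the completion that defines $H^p_L(X)$. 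I expect the main obstacle to be the bookkeeping in the far-annuli estimate: one has to track how the $(I-e^{-r_B^2L})^M$ factor interacts with the tails of the molecule that already extend over many annuli, splitting $a=\sum_i a\chi_{2^iB}$, applying \eqref{e2.4} to each piece on each further-out annulus, and carrying out a double sum in $(i,j)$ that still converges thanks to the exponent $s>n(1/p-1/2)$. Everything else — the local $L^2$ estimate, the doubling arithmetic, and the final summation — is routine. For the full details I would follow the argument of \cite{DY2}, which establishes precisely this criterion.
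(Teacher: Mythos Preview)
Your proposal is correct and aligns with the paper's own treatment: the paper does not give an independent proof of Lemma~\ref{le2.7} but simply refers the reader to \cite[Theorem~3.1]{DY2}, which is exactly the source you invoke at the end. Your sketch of the molecular decomposition, the local/far split over annuli, and the summability condition $s>n(1/p-1/2)$ is the argument carried out in \cite{DY2}, so there is nothing to add.
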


\begin{proof}
For the proof, we refer the reader to  \cite[Theorem 3.1]{DY2}.
\end{proof}

\medskip

\section{Proof of Theorem~\ref{th1.1}}
\setcounter{equation}{0}

For $s\in{\mathbb R}$ and $p, q$ in $[1, \infty]$, we denote by  ${B_s^{p, q}}$ the usual Besov space (see, e.g., \cite{BL}).
In order to prove Theorem~\ref{th1.1}, let us first show the following useful auxiliary lemma.

 \begin{lemma}\label{le3.1}
  Assume that operator $L$ satisfies  the
 finite speed propagation property {\rm (FS)}
and condition  \eqref{RE1.4} for some $p_0,q$ satisfying $1\leq
p_0<2$ and $1\leq q\leq \infty$.
 Next assume that  function $F$ is even and supported on
 $[-R,R]$.
 Then for any $s>\max\{n(1/p_0-1/2)-1,0\}$, there exists a constant $C_s$
 such that for any ball $B=B(x,r)$ and for every $j=1,2\cdots$,
\begin{equation}\label {e33.1}
\big\|P_{B(x,2^jr)^c}F(\SL)P_{B(x, r)} \big\|_{p_0\rightarrow 2}
\leq \left \{
\begin{array}{ll}
C_sV(x, r)^{{1\over 2}-{1\over p_0}} \big( Rr \big)^{n({1\over
p_0}-{1\over 2})}(2^jr R)^{-s}\big\|\delta_RF\big\|_{B_s^{q,1}}  &rR\geq 1;\\[8pt]

C_sV(x, R^{-1})^{{1\over 2}-{1\over p_0}} (2^jr
R)^{-s}\big\|\delta_RF\big\|_{B_s^{q,1}}   & rR< 1.
\end{array}
\right.
\end{equation}
\end{lemma}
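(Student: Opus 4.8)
The plan is to decompose $F$ into pieces whose Fourier transforms are compactly supported, so that the finite speed propagation property (FS) forces each piece to have kernel supported in a controlled neighbourhood of the diagonal, and then use the restriction type estimate \eqref{RE1.4} to bound each piece in $p_0\to 2$ norm. First I would, by rescaling, reduce to the case $R=1$ (replace $F$ by $\delta_RF$ and use homogeneity of the estimate; the dilation turns $B(x,r)$ into a ball of radius $Rr$, which explains the two cases $rR\geq 1$ and $rR<1$ in the statement). So assume $\operatorname{supp}F\subseteq[-1,1]$ and set $r'=Rr$.

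Next I would choose an even cutoff $\psi\in C_c^\infty$ with $\operatorname{supp}\widehat\psi\subseteq[-1,1]$ and $\psi(0)\neq 0$, and use the Littlewood--Paley type decomposition $F=\sum_{\ell\ge 0}F_\ell$ where $\widehat{F_\ell}$ is localized at frequency $\sim 2^\ell$ (e.g. $\widehat{F_0}=\widehat{F}\,\eta(\cdot)$ and $\widehat{F_\ell}=\widehat{F}\,\eta(2^{-\ell}\cdot)$ for a suitable dyadic partition $\eta$). Then $\widehat{F_\ell}$ is supported in $\{|t|\le 2^{\ell+1}\}$ (roughly), so by Lemma~\ref{le2.2}, $K_{F_\ell(\SL)}$ is supported in $\mathcal D_{c2^\ell}$. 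Consequently $P_{B(x,2^jr)^c}F_\ell(\SL)P_{B(x,r)}=0$ unless $2^\ell\gtrsim 2^jr$, i.e. only the tail $\ell\gtrsim j+\log_2 r$ contributes. For each such $\ell$, I would bound $\|P_{B(x,2^jr)^c}F_\ell(\SL)P_{B(x,r)}\|_{p_0\to 2}\le\|F_\ell(\SL)P_{B(x,r)}\|_{p_0\to 2}$ and apply \eqref{RE1.4} (which, since $F_\ell$ is still supported in a bounded interval, gives the factor $V(x,r)^{1/2-1/p_0}(Rr)^{n(1/p_0-1/2)}\|\delta_R F_\ell\|_{L^q}$ in the case $rR\ge1$, with the appropriate substitute in the case $rR<1$). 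The point of the Besov norm on the right-hand side is precisely that $\sum_\ell 2^{\ell s}\|\delta_R F_\ell\|_{L^q}\sim\|\delta_R F\|_{B^{q,1}_s}$, and summing the geometric series $\sum_{\ell\gtrsim j+\log r'}2^{-\ell s}(\cdots)$ produces the gain $(2^jr'R)^{-s}$ — wait, $(2^j r R)^{-s}$ as written, since we already rescaled.

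There is a subtlety I would need to address: after the frequency decomposition, the pieces $F_\ell$ need not be supported in $[-1,1]$ anymore (convolution on the Fourier side spreads the support only slightly, but strictly it may leak). The cleanest fix is to instead split $F$ multiplicatively/additively on the spatial side so that each $F_\ell$ remains supported in, say, $[-2,2]$ — for instance by writing $F=F\cdot\sum_\ell\psi_\ell$ where the spatial decomposition is arranged so supports stay bounded, or by absorbing the mild support growth into the constant $C_s$ (enlarging $[-R,R]$ to $[-2R,2R]$ only changes constants in \eqref{RE1.4}). Either way the restriction estimate still applies with the same scaling. The condition $s>\max\{n(1/p_0-1/2)-1,0\}$ enters here: one needs $s>0$ for the geometric sum over $\ell$ to converge, and $s>n(1/p_0-1/2)-1$ is exactly what is required so that, when $rR<1$ (so the relevant scale for the restriction estimate is $1/R$ rather than $r$), one can still sum the contributions of scales between $1/R$ and $r$ — more precisely, in the range $rR<1$ one first uses (FS) only for $2^\ell\gtrsim 2^j r R$ interpreted correctly, and the mismatch in volume factors $V(x,r)$ versus $V(x,1/R)$ costs a power $(rR)^{-n(1/p_0-1/2)}$ which must be beaten by $2^{-\ell(s+1)}$ summed, whence the threshold $s+1>n(1/p_0-1/2)$.

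The main obstacle I expect is bookkeeping in the case $rR<1$: here the ball $B(x,r)$ is smaller than the natural scale $1/R$ dictated by the spectral support of $F$, so one cannot directly feed $r$ into \eqref{RE1.4}; instead one covers $B(x,2^jr)^c$ appropriately and applies the restriction estimate at scale $\sim 2^\ell/R$ (or $1/R$), then reconciles the volume factor $V(x,2^\ell/R)$ with the desired $V(x,1/R)$ using the doubling bound \eqref{e1.2}, picking up $2^{\ell n(1/p_0-1/2)}$ which is absorbed because $s+1>n(1/p_0-1/2)$. Getting the exponents to match the claimed inequality — in particular producing exactly $(2^jrR)^{-s}$ and the stated volume and $(Rr)^{n(1/p_0-1/2)}$ factors — is the delicate part; everything else (the finite propagation localization, the Besov decomposition, summing the geometric series) is routine once the rescaling is set up correctly.
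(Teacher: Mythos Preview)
Your overall direction is right, and close to the paper's: use finite speed propagation to discard the part of $F$ whose Fourier transform is supported near the origin, apply \eqref{RE1.4} to what remains, and organise the bookkeeping so that the Besov norm $\|\delta_R F\|_{B^{q,1}_s}$ emerges from a dyadic sum. But the handling of the support issue is a genuine gap. Your pieces $F_\ell$, defined by $\widehat{F_\ell}=\widehat F\,\eta(2^{-\ell}\cdot)$, are convolutions of $F$ with the Schwartz function $2^\ell\check\eta(2^\ell\cdot)$ on the \emph{spectral} side and therefore have infinite support there; this is not ``mild'' leakage that can be absorbed by enlarging $[-R,R]$ to $[-2R,2R]$, so condition \eqref{RE1.4} does not apply to $F_\ell$. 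Decomposing multiplicatively on the spectral side instead, as you also suggest, destroys the Fourier-side localisation needed for (FS); one cannot have compact support on both sides. The missing ingredient is Proposition~\ref{prop2.4}. The paper uses a \emph{single} Fourier cutoff $\psi$ at scale $2^{j-3}r$, so that (FS) removes $T_\psi F$ from the off-diagonal term, and then splits the remainder $F-T_\psi F$ into (i) $\delta_{R^{-1}}(\phi_0)[F-T_\psi F]$, supported in $[-4R,4R]$ and handled by \eqref{RE1.4} together with a Littlewood--Paley sum that produces the Besov norm, and (ii) a tail $(1-\delta_{R^{-1}}(\phi_0))T_\psi F$, bounded pointwise by $C(2^jrR)^{-s}\|\delta_RF\|_{L^q}(1+R^{-1}|\lambda|)^{-(s+1)}$ and controlled via $(E_{p_0,2})$.

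This is also where the threshold $s>n(1/p_0-1/2)-1$ actually enters: it is exactly the condition $N=s+1>n(1/p_0-1/2)$ required by $(E_{p_0,2})$ in Proposition~\ref{prop2.4}, not a volume-factor mismatch in the case $rR<1$. In that case one simply enlarges $B(x,r)$ to $B(x,R^{-1})$ and reruns the identical argument, obtaining the factor $V(x,R^{-1})^{1/2-1/p_0}$ directly with nothing to recover. (A smaller point: the ``reduction to $R=1$ by rescaling'' is not available on a general doubling metric measure space, which carries no dilation structure; one must carry $R$ through the estimates, as the paper does.)
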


\begin{proof}
Our approach is inspired by the proof of Lemma 3.4 in \cite{CS}. Fix
$r,j$ and $R$ such that $2^{j-5}r R>1$. Otherwise, by condition
\eqref{RE1.4} the proof of (\ref{e33.1}) is trivial. Note that
$\phi_0$ and $\phi_k$ are smooth even functions supported in
$[-4,4]$ and $[2^k,2^{k+2}]\cup[-2^{k+2},-2^k]$ respectively.
Further, $\phi_0(\l)+\sum_{k\geq 1}\phi_k(\l)=1$ for all $\l$ and
$\phi_0=1$ on $[-2,2]$. We set
$\psi(\lambda)=\phi_0(\lambda/(2^{j-3}r))$ and $\psi_0(\lambda)=
\phi_0(\lambda/(2^{j-3}rR))$. Define $T_\phi$ as $\widehat{T_\phi
F}=\phi\widehat{F}$. Since supp $\psi\subset [-2^{j-1}r,2^{j-1}r]$,
it follows by Lemma~\ref{le2.2},
$$
\mbox{supp}\, K_{T_\psi F(\SL)}\subset \big\{(z,y)\in X\times X:
d(z,y)\leq 2^{j-1}r\big\}.
$$
Hence,
$$
K_{F(\SL)}(z,y)=K_{[F-T_\psi F](\SL)}(z,y)
$$
for all $z,y$ such that $d(z,y)>2^{j-1}r$. We  obtain
\begin{eqnarray}\label {e3.3}
\big\|P_{B(x,2^jr)^c}F(\SL)P_{B(x, r)} \big\|_{p_0\rightarrow 2}
\leq \big\|[F-T_\psi F](\SL)P_{B(x, r)} \big\|_{p_0\rightarrow 2}.
\end{eqnarray}
 Now,
\begin{eqnarray}
F-T_\psi F&=&\sum_{k\geq 0}\delta_{R^{-1}}(\phi_k)[F-T_\psi
F]\nonumber\\
&=&\delta_{R^{-1}}(\phi_0)[F-T_\psi F]-\sum_{k\geq
1}\delta_{R^{-1}}(\phi_k)T_\psi F\nonumber\\
&=&\delta_{R^{-1}}(\phi_0)[F-T_\psi
F]-(1-\delta_{R^{-1}}(\phi_0))T_\psi F\nonumber,
\end{eqnarray}
 since $k\geq 1$, supp $\delta_{R^{-1}}(\phi_k)\subset
[-2^{k+2}R,-2^kR]\cup[2^kR,2^{k+2}R]$, and supp $F\subset [-R,R]$.
It follows that
\begin{eqnarray}\label {e3.4}
\big\|P_{B(x,2^jr)^c}F(\SL)P_{B(x, r)} \big\|_{p_0\rightarrow 2}
&\leq& \big\|\delta_{R^{-1}}(\phi_0)[F-T_\psi F](\SL)P_{B(x, r)}
\big\|_{p_0\rightarrow
2}\nonumber\\
&+&\big\|(1-\delta_{R^{-1}}(\phi_0))T_\psi F(\SL)P_{B(x, r)}
\big\|_{p_0\rightarrow 2}.
\end{eqnarray}

\medskip

\noindent
{\it Case 1: $rR\geq 1$}.

\smallskip

Note that  supp $\delta_{R^{-1}}(\phi_0)\subset [-4R,4R]$. By
condition \eqref{RE1.4},
\begin{eqnarray*}
 \big\|\delta_{R^{-1}}(\phi_0)[F-T_\psi F](\SL)P_{B(x, r)}
\big\|_{p_0\rightarrow 2} &\leq&C V(x, r)^{{1\over 2}-{1\over p_0}}
\big( Rr \big)^{n({1\over
p_0}-{1\over 2})}\big\|\phi_0\delta_{R}[F-T_\psi F]\big\|_{L^q}\\
&\leq&C V(x, r)^{{1\over 2}-{1\over p_0}} \big( Rr \big)^{n({1\over
p_0}-{1\over 2})}\big\|\delta_{R}F-T_{\psi_0} (\delta_{R}F)\big\|_{L^q}\\
&=&C V(x, r)^{{1\over 2}-{1\over p_0}} \big( Rr \big)^{n({1\over
p_0}-{1\over 2})}\big\|\sum_{i\geq
0}T_{\phi_i}[I-T_{\psi_0}]\delta_{R}F\big\|_{L^q}.
\end{eqnarray*}
Note that
$\phi_i(\l)(1-\psi_0(\l))=\phi_i(\l)(1-\phi_0(\l/(2^{j-3}rR)))=0$
for all $\l\in \mathbb{R}$ unless $2^i\geq 2^{j-4}rR$. Consequently,
$T_{\phi_i}[I-T_{\psi_0}]\delta_{R}F=0$ unless $i\geq i_0$, where
$i_0=\log_2(2^{j-4}rR)$, and
\begin{eqnarray}\label {e3.5}
 \big\|\delta_{R^{-1}}(\phi_0)[F-T_\psi F](\SL)P_{B(x, r)}
\big\|_{p_0\rightarrow 2} &\leq&C V(x, r)^{{1\over 2}-{1\over p_0}}
\big( Rr \big)^{n({1\over p_0}-{1\over 2})}\sum_{i\geq
i_0}\big\|T_{\phi_i}[I-T_{\psi_0}]\delta_{R}F\big\|_{L^q}\nonumber\\
&\leq&C V(x, r)^{{1\over 2}-{1\over p_0}} \big( Rr \big)^{n({1\over
p_0}-{1\over 2})}\sum_{i\geq
i_0}\big\|T_{\phi_i}\delta_{R}F\big\|_{L^q}\nonumber\\
&\leq&C V(x, r)^{{1\over 2}-{1\over p_0}} \big( Rr \big)^{n({1\over
p_0}-{1\over 2})}2^{-i_0s}\sum_{i\geq
i_0}2^{is}\big\|T_{\phi_i}\delta_{R}F\big\|_{L^q}\nonumber\\
&\leq&C V(x, r)^{{1\over 2}-{1\over p_0}} \big( Rr \big)^{n({1\over
p_0}-{1\over 2})}(2^jrR)^{-s}\big\|\delta_{R}F\big\|_{B_{s}^{q,1}}.
\end{eqnarray}
We now treat the remain term in formula (\ref{e3.4}). We
claim that for any $s>0$,
\begin{eqnarray}\label {e3.6}
\sup_{\l}(1-\delta_{R^{-1}}(\phi_0)(\l))T_\psi
F(\l)(1+R^{-1}|\l|)^{s+1}\leq C(2^jrR)^{-s}\|\delta_RF\|_{L^q}.
\end{eqnarray}
Let ${\check f}$ denotes the inverse Fourier transform of function
$f$. We observe that $|\l-y|\approx |\l|$ if $|\l|\geq2R$ and
$|y|\leq R$, and hence
\begin{eqnarray*}
&&\hspace{-1.5cm} \sup_{\l}(1-\delta_{R^{-1}}(\phi_0)(\l))T_\psi
F(\l)(1+R^{-1}|\l|)^{s+1}\\
&\leq& \sup_{\l}(1-\phi_0(\l/R))|(\check{\psi}*F)(\l)|(1+|\l|/R)^{s+1}\\
&\leq&
\sup_{\l}(1-\phi_0(\l/R))\Big|\int_{-R}^R F(y)\check{\psi}(\l-y)dy\Big|(1+|\l|/R)^{s+1}\\
&\leq&
\sup_{\l}(1-\phi_0(\l/R))2^{j-3}r\int_{-R}^R |F(y)|(1+2^{j-3}r|\l-y|)^{-s-1}dy(1+|\l|/R)^{s+1}\\
&\leq&
C\sup_{\l}(1-\phi_0(\l/R))2^{j-3}rR(1+2^{j-3}r|\l|)^{-s-1}(1+|\l|/R)^{s+1}\|\delta_R F\|_{L^q}\\
&\leq& C(2^jrR)^{-s}\|\delta_RF\|_{L^q}.
\end{eqnarray*}
 From (\ref{e3.6}) and Proposition~\ref{prop2.4}, it
follows that for any $s>\max\{n(1/p_0-1/2)-1,0\}$,
\begin{eqnarray}\label {e3.7}
&&\hspace{-1.5cm}\big\|(1-\delta_{R^{-1}}(\phi_0))T_\psi F(\SL)P_{B(x, r)}
\big\|_{p_0\rightarrow 2} \nonumber\\
&\leq& \sup_{\l}\left|(1-\delta_{R^{-1}}(\phi_0)(\l))T_\psi
F(\l)(1+R^{-1}|\l|)^{s+1}\right|\big\|(I+R^{-1}\SL)^{-s-1}P_{B(x,
r)}
\big\|_{p_0\rightarrow 2}\nonumber\\
&\leq& CV(x, r)^{{1\over 2}-{1\over p_0}} \big( Rr \big)^{n({1\over
p_0}-{1\over 2})}(2^jr R)^{-s}\big\|\delta_RF\big\|_{L^q}.
\end{eqnarray}

Then estimates of (\ref{e3.4}), (\ref{e3.5}) and (\ref{e3.7}) imply
estimate (\ref{e33.1})  for any $s>\max\{n(1/p_0-1/2)-1,0\}$.

\medskip

\noindent
{\it Case 2: $rR<1$}.

\smallskip

It follows from (\ref{e3.4}) that
\begin{eqnarray}\label {e3.8}
&&\hspace{-1.2cm}\big\|P_{B(x,2^jr)^c}F(\SL)P_{B(x, r)} \big\|_{p_0\rightarrow 2}\nonumber\\
&\leq& \big\|\delta_{R^{-1}}(\phi_0)[F-T_\psi F](\SL)P_{B(x,
R^{-1})} \big\|_{p_0\rightarrow 2} +
\big\|(1-\delta_{R^{-1}}(\phi_0))T_\psi F(\SL)P_{B(x, R^{-1})}
\big\|_{p_0\rightarrow 2}.
\end{eqnarray}
 Replacing $B(x,r)$ by $B(x,R^{-1})$ in (\ref{e3.5}) and
(\ref{e3.7}), a similar argument as in {\it Case 1} shows
(\ref{e33.2}), and we skip it here. The proof of  Lemma~\ref{le3.1}
is complete.
\end{proof}

\bigskip

\noindent {\bf Proof of Theorem~\ref{th1.1}}. To prove  Theorem~\ref{th1.1},  by Lemma~\ref{le2.7}
it suffices to  verify  condition  (\ref{e2.4}).
Recall that $\phi$ is
a non-negative $C_0^{\infty}$ function such that
\begin{eqnarray*}
{\rm supp} \ \phi \subseteq ({1\over 4}, 1) \ \ {\rm and} \ \
\sum_{\ell\in {\Bbb Z}}\phi(2^{-\ell}\lambda)=1 \ \ \ {\rm for\
all}\  \lambda>0.
\end{eqnarray*}
Then
$$
F(\l)=\sum_{\ell\in \mathbb{Z}}\phi(2^{-\ell}\l)F(\l)=\sum_{\ell\in
\mathbb{Z}}F_\ell(\l) \ \ \ {\rm for\ all}\  \lambda>0.
$$
 For every $\ell\in \mathbb{Z}$ and $r>0$, set  $F_{r,M}^\ell=F_\ell(\l)(1-e^{-r^2\l^2})^M$. So for any
ball $B=B(x,r)$,
\begin{eqnarray}\label {e3.9}
\|F(\SL)(I-e^{-r^2L})^M f\|_{L^2(2^jB\backslash 2^{j-1}B)}\leq
\sum_{\ell\in \mathbb{Z}}\|F_{r,M}^{\ell}(\SL)
f\|_{L^2(2^jB\backslash 2^{j-1}B)}.
\end{eqnarray}
Fix $f\in
 L^2(X)$ with supp $f\subset B$ and $j\geq 2$. Note that supp $F_{r,M}^\ell\subset [-2^{\ell},2^{\ell}]$. So
 if $r2^\ell<1$,
 it follows Lemma~\ref{le3.1} that for any
$s>\max\{n(1/p_0-1/2)-1,0\}$
\begin{eqnarray}\label {e3.10}
\|F_{r,M}^{\ell}(\SL) f\|_{L^2(2^jB\backslash 2^{j-1}B)}
&\leq& \|P_{2^jB\backslash
2^{j-1}B}F_{r,M}^{\ell}(\SL)P_{B}\|_{p_0\to
2}\|f\|_{L^{p_0}}\nonumber\\
&\leq&CV(x, 2^{-\ell})^{{1\over 2}-{1\over p_0}} (2^jr
2^\ell)^{-s}\big\|\delta_{2^\ell}F_{r,M}^\ell\big\|_{B_s^{q,1}}\|f\|_{L^{p_0}}\nonumber\\
&\leq&CV(x, 2^{-\ell})^{{1\over 2}-{1\over p_0}} (2^jr
2^\ell)^{-s}\big\|\delta_{2^\ell}F_{r,M}^\ell\big\|_{B_s^{q,1}}V(x,r)^{{1\over p_0}-{1\over 2}}\|f\|_{L^2}\nonumber\\
&\leq&C 2^{-js}(2^\ell
r)^{2M-s}\big\|\phi\delta_{2^\ell}F\big\|_{B_s^{q,1}}\|f\|_{L^2}.
\end{eqnarray}
Similarly if  $r2^\ell\geq 1$, then
\begin{eqnarray}\label {e3.11}
 \|F_{r,M}^{\ell}(\SL) f\|_{L^2(2^jB\backslash 2^{j-1}B)}
&\leq& \|P_{2^jB\backslash
2^{j-1}B}F_{r,M}^{\ell}(\SL)P_{B}\|_{p_0\to
2}\|f\|_{L^{p_0}}\nonumber\\
&\leq& CV(x, r)^{{1\over 2}-{1\over p_0}} \big( 2^\ell r
\big)^{n({1\over
p_0}-{1\over 2})}(2^jr 2^\ell)^{-s}\big\|\delta_{2^\ell}F_{r,M}^\ell\big\|_{B_s^{q,1}}\|f\|_{L^{p_0}}\nonumber\\
&\leq&C\big( 2^\ell r \big)^{n({1\over
p_0}-{1\over 2})}(2^jr 2^\ell)^{-s}\big\|\delta_{2^\ell}F_{r,M}^\ell\big\|_{B_s^{q,1}}\|f\|_{L^2}\nonumber\\
&\leq&C 2^{-js}(2^\ell r)^{n({1\over p_0}-{1\over
2})-s}\big\|\phi\delta_{2^\ell}F\big\|_{B_s^{q,1}}\|f\|_{L^2}.
\end{eqnarray}
Note that for any $\varepsilon>0$
$\|F\|_{B_{s-\varepsilon}^{q,1}}\leq C_\varepsilon \|F\|_{W^{s,q}}$ (see, e.g. \cite{BL}).
Choosing $s$ such that $M>s>n(1/p-1/2)$,  it follows from
(\ref{e1.7}), (\ref{e3.9}), (\ref{e3.10}) and (\ref{e3.11}) that
\begin{eqnarray*}
\|F(\SL)(I-e^{-r^2L})^M f\|_{L^2(2^jB\backslash 2^{j-1}B)}\leq C
2^{-js}\|f\|_{L^2}.
\end{eqnarray*}
This proves (\ref{e2.4}). Hence, by  Lemma~\ref{le2.7}, $F(\SL)$ can be
extended to be
 a bounded operator on $H_L^p(X)$. The proof of Theorem~\ref{th1.1} is complete.
 \hfill{} $\Box$

\bigskip

\noindent {\bf Proof of Corollary~\ref{coro1.2}}. Firstly,  we note
that when $p_1=p_0$, Corollary~\ref{coro1.2} follows from
\cite[Theorem~4.1]{COSY}. Secondly, it follows by
Theorem~\ref{th1.1} that Corollary~\ref{coro1.2} holds for $p_1=1$.
Now we follow an idea as in \cite{M} to  construct a family of
spectral multipliers $\{F_z: z\in \mathbf{C}, 0\leq \mbox{Re}z\leq
1\}$ as follows:
$$
F_z(\l)=\sum_{j=
-\infty}^{\infty}\eta(2^{-j}\l)\Big(1-2^{2j}\frac{d}{d\l^2}\Big)^{(z-\theta)n(1-1/p_0)/2}\big(F(\l)\phi(2^{-j}\l)\big),
$$
where $\theta=(1-1/p_1)/(1-1/p_0)$ and $\eta\in C_c^\infty([1/4,4]),
\phi\in C_c^\infty([1/2,2])$, $\eta=1$ on $[1/2,2]$ and
$\sum_j\eta(2^{-j}\l)= \sum_j\phi(2^{-j}\l)=1$ for all $\l>0$.
Observe that if $z=1+iy$, then
$$
\sup_{t>0}\|\phi\delta_tF_{1+iy}\|_{W^{s_1,q}}\leq
C\sup_{t>0}\|\phi\delta_tF\|_{W^{s,q}}(1+|y|)^{n/2}
$$
 for
some $s_1>n(1/p_0-1/2)$. On the other hand, if $z=iy$, then
$$
\sup_{t>0}\|\phi\delta_tF_{iy}\|_{W^{s_2,q}}\leq
C\sup_{t>0}\|\phi\delta_tF\|_{W^{s,q}}(1+|y|)^{n/2}
$$
for some $s_2>n/2$. It follows by  \cite[Theorem 4.1]{COSY}
 that   $F_{1+iy}(\SL)$ is bounded on
$H_L^{p}(X)$ for $p_0<p<p_0'$ and by Theorem~\ref{th1.1} that
$F_{iy}(\SL)$ is bounded on $H_L^1(X)$. Applying the three line
theorem, we get $F_\theta(\SL)= F(\SL)$ is bounded on $H_L^p(X)$,
that is, $F(\SL)$  is bounded on $L^p(X)$ for $p_1<p<p_1'$.
 \hfill{} $\Box$

\bigskip

\section{Applications}
\setcounter{equation}{0}

Theorem~\ref{th1.1} is valid for abstract self-adjoint operators.
However, before the result can be applied one has to verify
conditions ${\rm (GE)}$ and \eqref{RE1.4}. Usually proving
restriction type condition \eqref{RE1.4} is difficult. See
discussions in \cite{COSY}. In this section, we discuss several
examples of operators which satisfy
 required restriction type estimates and apply our main results to these operators.
\subsection{Sub-Laplacians on homogeneous groups}\label{sec61}

Let ${\bf G}$  be a homogeneous Lie group of polynomial growth with
homogeneous dimension $n$ (see for examples,  \cite{C, DeM, FS, HS})
and let $X_1, ..., X_k$ be a system of
 left-invariant vector fields on ${\bf G}$ satisfying the H\"ormander condition. We define
 the sub-Laplace operator $L$ acting on $L^2({\bf G})$ by the formula
 \begin{eqnarray}
L=-\sum_{i=1}^k X_i^2. \label{e4.1}
\end{eqnarray}

\begin{proposition}\label{prop4.1} Let $L$ be the homogeneous sub-Laplacian
defined by the formula {\rm (\ref{e4.1})} acting on a homogeneous
group ${\bf G}$.  Then condition \eqref{RE1.4} holds for $p_0=1$ and
 $q=2$, and hence results of  Theorem~\ref{th1.1}  and
Corollary~\ref{coro3.3} hold for $q=2$.
\end{proposition}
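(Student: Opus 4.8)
The plan is to verify the restriction type condition \eqref{RE1.4} with $p_0=1$ and $q=2$; once this is in hand the assertion is immediate, because the heat kernel of a homogeneous sub-Laplacian satisfies the Gaussian bound \eqref{e1.8} and a fortiori {\rm (GE)}, so Theorem~\ref{th1.1} and Corollary~\ref{coro3.3} apply as stated.

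\emph{Reduction to an estimate on the convolution kernel.} Since $L$ is left-invariant, for each bounded Borel $F$ the operator $F(\SL)$ is convolution with a kernel $k_F$ on ${\bf G}$, and by Minkowski's integral inequality $\|F(\SL)P_{B(x,r)}\|_{1\to 2}\le\|F(\SL)\|_{1\to 2}\le\|k_F\|_{L^2({\bf G})}$ for every ball. On a homogeneous group the volume is an exact power, $V(x,r)=\gamma r^{n}$ with $n$ the homogeneous dimension (which is also the best exponent in \eqref{e1.2}), and $\|\delta_R F\|_{L^2}=R^{-1/2}\|F\|_{L^2}$. Inserting these into the right-hand side of \eqref{RE1.4} with $p_0=1$, $q=2$ the factor $r$ cancels, so \eqref{RE1.4} reduces to the single $x$- and $r$-independent inequality
$$\big\|k_F\big\|_{L^2({\bf G})}\ \le\ C\,R^{(n-1)/2}\,\|F\|_{L^2(\RR)}\qquad\text{whenever}\ \ \supp F\subset[0,R].$$

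\emph{A Plancherel identity.} For a homogeneous sub-Laplacian the quadratic functional $m\mapsto\|k_{m(L)}\|_{L^2({\bf G})}^2$ equals integration against a positive ``Plancherel measure'' $d\mu_L$ on $[0,\infty)$, and the scaling relation $L(g\circ\delta_t)=t^2(Lg)\circ\delta_t$ forces $d\mu_L$ to be homogeneous, so that $d\mu_L(\lambda)=c\,\lambda^{n/2-1}\,d\lambda$ (the exponent may alternatively be identified from the on-diagonal heat kernel behaviour $p_t(x,x)\approx t^{-n/2}$); see \cite{C}, \cite{COSY}, \cite{DOS} and the references therein. Taking $m(\lambda)=F(\sqrt\lambda)$ and substituting $\lambda\mapsto\lambda^2$ gives $\|k_F\|_{L^2({\bf G})}^2=c'\int_0^{\infty}|F(\lambda)|^2\lambda^{n-1}\,d\lambda$, whence, using $\supp F\subset[0,R]$ and $n\ge1$ to bound $\lambda^{n-1}\le R^{n-1}$ on the support, $\|k_F\|_{L^2({\bf G})}^2\le c'R^{n-1}\|F\|_{L^2}^2$. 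This is exactly the inequality required above, so \eqref{RE1.4} holds and the proof is complete.

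The one substantial input is the Plancherel identity; it is classical for homogeneous sub-Laplacians, so the ``obstacle'' is really only bibliographic. One may instead verify the equivalent condition ${\rm (R_1)}$ of Proposition~\ref{prop2.3}, which applies since $V(x,r)=\gamma r^{n}$: the spectral projection $E_{\SL}([0,\lambda])$ is a positive operator, so self-adjointness and idempotency force its Schwartz kernel to be pointwise dominated by its diagonal value, which by the same Plancherel computation is a constant multiple of $\lambda^{n}$; differentiating in $\lambda$ then gives $\|dE_{\SL}(\lambda)\|_{1\to\infty}\le C\lambda^{n-1}$, i.e. ${\rm (R_1)}$.
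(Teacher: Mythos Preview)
Your proof is correct and follows essentially the same route as the paper. Both arguments rest on the Plancherel-type identity
\[
\|F(\SL)\|_{2\to\infty}^2\ \big(=\|k_F\|_{L^2({\bf G})}^2\big)\ =\ C\int_0^\infty |F(t)|^2 t^{n-1}\,dt,
\]
which the paper quotes from \cite{DOS,C} and then cites \cite{COSY} for the passage to \eqref{RE1.4}; you instead carry out that passage explicitly (the cancellation of $r$ on the right of \eqref{RE1.4} and the trivial bound $\lambda^{n-1}\le R^{n-1}$), which is exactly the short computation hidden behind the paper's citation.

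One minor remark: in your closing alternative via $({\rm R}_1)$ the step ``differentiating in $\lambda$'' is heuristic as written---passing from a pointwise bound on the kernel of $E_{\SL}([0,\lambda])$ to a $1\to\infty$ bound on the spectral \emph{density} $dE_{\SL}(\lambda)$ needs a little more care (e.g.\ a difference quotient argument together with positivity). Since your main argument is already complete this does not affect the proof, but you may wish either to flesh out that sentence or simply drop the alternative.
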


 \noindent {\bf Proof.}  It is well known that the heat kernel  corresponding
to the operator $L$ satisfies Davies-Gaffeny estimate {\rm (GE)}. It
is also not difficult to check that for some constant $C>0$
$$
\|F(\sqrt L)\|_{L^2(X) \to L^\infty(X)}^2 =C \int_0^\infty |F(t)|^2
t^{n-1}dt.
$$
See for example equation (7.1) of \cite{DOS} or \cite[Proposition
10]{C}. It was proved that the above equality implies condition
\eqref{RE1.4} with $p_0=1$ and $q=2$ (see \cite[Section 12]{COSY}).
Then Theorem~\ref{th1.1} and Corollary~\ref{coro3.3} imply
Proposition~\ref{prop4.1}. \hfill{}$\Box$

\medskip

Proposition~\ref{prop4.1} can be extended to ``quasi-homogeneous''
operators
 acting on homogeneous groups, see \cite{S1} and \cite{DOS}.

\bigskip

\subsection{Shr\"odinger operators on asymptotically conic manifolds}

Asymptotically conic manifolds (see \cite{Me})
 are defined as  the interior of a compact manifold $M$ with boundary, such that
 the metric $g$ is smooth on the interior and in a collar neighbourhood of the boundary
has the form
\[g=\frac{dx^2}{x^4}+\frac{h(x)}{x^2} \]
where $x$ is a smooth boundary defining function  and $h(x)$ is a
smooth family of metrics on the boundary.

\begin{proposition}\label{pprop4.2} Let $(M,g)$ be a nontrapping
asymptotically conic manifold of dimension $n \geq 3$, and let $x$
be a smooth boundary defining function of $\pl M$. Let $L:= -
\Delta+V$ be a Schr\"odinger operator with $V\in x^3C^\infty(M)$ and
assume that $L$ is a positive operator and $0$ is neither an
eigenvalue nor a resonance. Then condition \eqref{RE1.4}
 is true with  $q=2$ for all $1\leq p_0\leq (2n+2)/(n+3)$,
 and hence results of Theorem~\ref{th1.1} and Corollary~\ref{coro3.3} hold for $q=2$.
\end{proposition}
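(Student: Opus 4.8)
The plan is to reduce everything to checking the two hypotheses of Theorem~\ref{th1.1} and Corollary~\ref{coro3.3}, namely the Davies--Gaffney estimate {\rm (GE)} and the restriction type condition \eqref{RE1.4} with $q=2$ for the asserted range of $p_0$; once these are in place, both the Hardy space multiplier bound and the Bochner--Riesz bound follow at once by applying those two statements with $X=M$.

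First I would verify {\rm (GE)}. Since $L=-\Delta+V$ differs from the (nonnegative) Laplace--Beltrami operator by a bounded potential, it is a self-adjoint second order differential operator that is bounded below (indeed positive by hypothesis), and the associated wave equation $\pl_t^2 u+Lu=0$ has unit speed of propagation with respect to the Riemannian distance $d$ — this depends only on the principal symbol $|\xi|_g^2$. Hence $\cos(t\SL)$ satisfies the finite speed propagation property {\rm (FS)}, and by Proposition~\ref{prop2.1} this is equivalent to {\rm (GE)}.

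The substantive point is \eqref{RE1.4} with $q=2$, which I would establish in two steps. Step one: on a nontrapping asymptotically conic manifold of dimension $n$ the volume function is uniformly Ahlfors regular, i.e. there are $0<C_1\le C_2<\infty$ with $C_1 r^n\le V(x,r)\le C_2 r^n$ for all $x\in M$ and all $r>0$ — for small $r$ because $M$ is a smooth $n$-manifold, and for large $r$ because near $\pl M$ the metric $g=dx^2/x^4+h(x)/x^2$ is of conic, hence Euclidean-type, growth. In particular \eqref{e1.2} holds with this value of $n$, and Proposition~\ref{prop2.3} applies, so that \eqref{RE1.4} with $q=2$ is \emph{equivalent} to the spectral measure bound ${\rm (R_{p_0})}$. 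Step two: invoke the restriction estimate of Guillarmou--Hassell--Sikora \cite{GHS}: under exactly the present hypotheses ($(M,g)$ a nontrapping asymptotically conic manifold, $n\ge 3$, $V\in x^3C^\infty(M)$, $L$ positive, and $0$ neither an eigenvalue nor a resonance of $L$), the spectral measure satisfies $\|dE_{\SL}(\lambda)\|_{p_0\to p_0'}\le C\lambda^{n(1/p_0-1/p_0')-1}$ for all $1\le p_0\le 2(n+1)/(n+3)=(2n+2)/(n+3)$, which is precisely ${\rm (R_{p_0})}$ in that range. Combining the two steps gives \eqref{RE1.4} with $q=2$ for all $1\le p_0\le (2n+2)/(n+3)$.

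With {\rm (GE)} and \eqref{RE1.4} verified, Theorem~\ref{th1.1} and Corollary~\ref{coro3.3} apply directly and yield the assertion. I expect the only genuinely deep ingredient to be the spectral measure estimate ${\rm (R_{p_0})}$ imported from \cite{GHS}, whose proof rests on the detailed microlocal/parametrix analysis of the low- and high-energy resolvent of $L$ on asymptotically conic spaces; it is there that the dimension restriction $n\ge 3$ and the eigenvalue/resonance hypothesis at zero are really needed. The remaining steps — the finite speed of propagation, the uniform Ahlfors regularity of the volume, and the bookkeeping of exponents through Propositions~\ref{prop2.1}, \ref{prop2.3} and \ref{prop2.4} — are routine.
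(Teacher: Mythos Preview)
Your argument is correct and follows essentially the same route as the paper: cite \cite{GHS} for ${\rm (R_{p_0})}$ in the range $1\le p_0\le (2n+2)/(n+3)$, convert to \eqref{RE1.4} with $q=2$ via Proposition~\ref{prop2.3}, and then invoke Theorem~\ref{th1.1} and Corollary~\ref{coro3.3}. You are in fact more thorough than the paper, which does not explicitly verify {\rm (GE)} or the Ahlfors regularity hypothesis needed for Proposition~\ref{prop2.3}; your treatment of these points is accurate.
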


\begin{proof}
It was proved in \cite[Theorem 1.3]{GHS} that condition ${\rm
(R_{p_0})}$
 is satisfied for $L$ when $1\leq p_0\leq (2n+2)/(n+3)$.
 By Proposition~\ref{prop2.3}, Theorem~\ref{th1.1} and
Corollary~\ref{coro3.3}, Proposition ~\ref{pprop4.2} is proved.
\end{proof}
\bigskip

\subsection{Schr\"odinger operators with the inverse-square potential}

In this subsection, we consider Shr\"odinger operators $L=-\Delta +
V$ on $L^2(\RR^n, dx)$, where $V(x) = \frac{c}{| x |^2}$. We assume
that $n > 2$ and $c> -{(n-2)^2/4}$. The classical Hardy inequality
\begin{equation}\label{hardy1}
- \Delta\geq  {(n-2)^2\over 4}|x|^{-2},
\end{equation}
shows that the self-adjoint operator $L$  is non-negative if $c >
-{(n-2)^2/4}$. Set  $p_c^{\ast}=n/\sigma$, $\sigma= \max\{
(n-2)/2-\sqrt{(n-2)^2/4+c}, 0\}$. If $c \ge 0$ then the semigroup
$\exp(-tL)$ is pointwise bounded by the Gaussian upper bound
\eqref{e1.8} and hence acts on all $L^p$ spaces with $1 \le p \le
\infty$.  If $ c < 0$, then $\exp(-tL)$ acts as a uniformly bounded
semigroup on $L^p(\RR^n)$ for $ p \in ((p_c^{\ast})', p_c^{\ast})$
and the range $((p_c^{\ast})', p_c^{\ast})$ is optimal (see for
example \cite{LSV}).

\medskip

For these Shr\"odinger operators, we have the following proposition.
\begin{proposition}{\label{prop4.2}} Assume that $n > 2$ and let   $L=-\Delta + V$ be a Schr\"odinger operator
on $L^2(\RR^n, dx)$ where $V(x) = \frac{c}{| x
|^2}$ and $c>-{(n-2)^2/4}$. Suppose  that
 $p_0\in ((p_c^{\ast}){'}, 2n/(n+2)]$ where $p_c^{\ast}=n/\sigma$ and $\sigma= \max\{ (n-2)/2-\sqrt{(n-2)^2/4+c}, 0\}$
 and $(p_c^{\ast}){'}$ its dual exponent.
Then condition \eqref{RE1.4}
 is true with  $q=2$, and hence results of Theorem~\ref{th1.1} and Corollary~\ref{coro3.3} hold for $q=2$.
\end{proposition}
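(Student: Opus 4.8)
The plan is to follow the route of Proposition~\ref{pprop4.2}: reduce condition \eqref{RE1.4} with $q=2$ to the spectral measure estimate ${\rm (R_{p_0})}$ by means of Proposition~\ref{prop2.3}, and then establish ${\rm (R_{p_0})}$ for the inverse-square operator. Since $L=-\Delta+c|x|^{-2}$ acts on $\RR^n$ with Lebesgue measure, $V(x,r)=\omega_n r^n$, so the hypothesis $C_1r^n\le V(x,r)\le C_2r^n$ of Proposition~\ref{prop2.3} holds; hence it is enough to prove
$$
\big\|dE_{\SL}(\l)\big\|_{p_0\to p_0'}\le C\,\l^{\,n(1/p_0-1/p_0')-1},\qquad\l>0,
$$
together with the Davies--Gaffney estimate {\rm (GE)}. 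Property {\rm (GE)} is classical here: for $c\ge0$ the heat kernel of $L$ is pointwise dominated by the Gaussian kernel of $e^{t\Delta}$, and for $-(n-2)^2/4<c<0$ property {\rm (GE)} is likewise known to hold (see, e.g., \cite{LSV}).

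\emph{Reduction to a single frequency.} The operator $L$ is homogeneous of degree two: with $D_af(x)=f(ax)$ one has $L\,D_a=a^2D_aL$, hence $E_{\SL}(a\Omega)=D_aE_{\SL}(\Omega)D_a^{-1}$ and therefore $dE_{\SL}(\l)=\l^{-1}D_\l\,dE_{\SL}(1)\,D_\l^{-1}$. Since $\|D_\l\|_{L^{p_0'}\to L^{p_0'}}=\l^{-n/p_0'}$ and $\|D_\l^{-1}\|_{L^{p_0}\to L^{p_0}}=\l^{\,n/p_0}$, the displayed bound for all $\l>0$ — with exactly the power $n(1/p_0-1/p_0')-1$ — follows once the single estimate $\|dE_{\SL}(1)\|_{p_0\to p_0'}\le C$ is known for $p_0\in((p_c^\ast)',2n/(n+2)]$. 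So the whole problem reduces to this unit-frequency bound.

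\emph{The unit-frequency estimate.} For this I would use the exact diagonalisation of $L$ by separation of variables. Writing $L^2(\RR^n)=\bigoplus_{k\ge0}L^2(\RR_+,r^{n-1}dr)\otimes\mathcal H_k$ in spherical harmonics, the restriction of $L$ to the degree-$k$ sector is the Bessel-type operator of order $\nu_k=\sqrt{(k+\frac{n-2}{2})^2+c}$, diagonalised by the modified Hankel transform $\mathcal H_{\nu_k}$; in particular $\nu_0=\sqrt{\frac{(n-2)^2}{4}+c}$, so that $\nu_0-\frac{n-2}{2}=-\sigma$ when $c<0$ and $\nu_0-\frac{n-2}{2}\ge0$ when $c\ge0$. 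Consequently the kernel of $dE_{\SL}(1)$ is, up to absolute constants,
$$
K(x,y)=\sum_{k\ge0}(|x|\,|y|)^{-\frac{n-2}{2}}\,J_{\nu_k}(|x|)\,J_{\nu_k}(|y|)\,Z_k\!\Big(\tfrac{x}{|x|},\tfrac{y}{|y|}\Big),
$$
with $Z_k$ the zonal harmonic of degree $k$. One then bounds the $L^{p_0}\to L^{p_0'}$ norm of the associated integral operator by splitting into the two Bessel regimes. On $\{|x|,|y|\lesssim1\}$ the $k=0$ term dominates and, since $J_{\nu_0}(r)\asymp r^{\nu_0}$ near $0$, the kernel there behaves like $|x|^{\nu_0-\frac{n-2}{2}}|y|^{\nu_0-\frac{n-2}{2}}$, i.e. like $|x|^{-\sigma}|y|^{-\sigma}$ for $c<0$; the resulting essentially rank-one operator maps $L^{p_0}$ to $L^{p_0'}$ precisely when $|x|^{-\sigma}\in L^{p_0'}(\{|x|\le1\})$, that is $\sigma p_0'<n$, equivalently $p_0>(p_c^\ast)'$. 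On $\{\max(|x|,|y|)\gtrsim1\}$ the asymptotics $J_{\nu_k}(r)=O(r^{-1/2})$ (uniform in the order $\nu_k$ by standard Bessel size bounds), together with $L^2$-orthogonality of the spectral projections and the Sobolev--Hardy inequality $\|g\|_{L^{2n/(n-2)}}\lesssim\|L^{1/2}g\|_{L^2}$ — valid for every $c>-(n-2)^2/4$, by the Hardy inequality \eqref{hardy1} — control this part up to the endpoint $p_0=2n/(n+2)$ (note $2n/(n+2)$ is the dual of the Sobolev exponent $2n/(n-2)$, and $n(1/p_0-1/p_0')-1=1$ there). Interpolating the two regimes yields $\|dE_{\SL}(1)\|_{p_0\to p_0'}\le C$ on the stated interval; then Proposition~\ref{prop2.3}, Theorem~\ref{th1.1} and Corollary~\ref{coro3.3} complete the proof.

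The main obstacle is the unit-frequency spectral measure bound, and inside it the need to control the series over spherical harmonics \emph{uniformly in the Bessel order} $\nu_k$ simultaneously with the singularity of $K$ at the origin: it is the interplay of these two features that produces the lower threshold $(p_c^\ast)'$ (from the local $L^{p_0'}$-integrability condition $\sigma p_0'<n$) and the upper endpoint $2n/(n+2)$ (from the Sobolev exponent). As an alternative to the Hankel-transform route one could derive ${\rm (R_{p_0})}$ from weighted $L^{p_0}\to L^{p_0'}$ resolvent estimates for $(L-(\l\pm i0)^2)^{-1}$ via the jump formula $dE_{\SL}(\l)=\frac{\l}{i\pi}[(L-(\l+i0)^2)^{-1}-(L-(\l-i0)^2)^{-1}]$, or from dispersive/Strichartz estimates for the half-wave group $e^{it\SL}$; the explicit Hankel expansion, however, keeps the dependence on the coupling constant $c$ most transparent.
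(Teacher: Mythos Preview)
Your overall reduction is exactly the paper's: use that $V(x,r)=\omega_n r^n$ on $\RR^n$, invoke Proposition~\ref{prop2.3} to reduce \eqref{RE1.4} with $q=2$ to the restriction estimate ${\rm (R_{p_0})}$, and then apply Theorem~\ref{th1.1} and Corollary~\ref{coro3.3}. The paper stops there, simply citing \cite[Section~10]{COSY} for ${\rm (R_{p_0})}$ in the stated range of $p_0$ (with $p_0=1$ included when $c\ge0$). So at the level of the proof of Proposition~\ref{prop4.2} itself, you and the paper agree.

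Where you go further is in sketching a self-contained proof of ${\rm (R_{p_0})}$. The scaling reduction to $\l=1$ is correct and standard, and your near-origin analysis is right: the $k=0$ mode governs, $J_{\nu_0}(r)\sim r^{\nu_0}$ produces the factor $|x|^{-\sigma}|y|^{-\sigma}$, and the rank-one bound $\sigma p_0'<n$ is precisely $p_0>(p_c^\ast)'$. The gap is in the ``large-region'' part. The asymptotic $J_{\nu_k}(r)=O(r^{-1/2})$ is \emph{not} uniform in $\nu_k$: it fails in the transition zone $r\sim\nu_k$, and with only this pointwise bound the sum $\sum_k Z_k(\cdot,\cdot)$ diverges (recall $\|Z_k\|_\infty\sim k^{n-2}$). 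Your appeal to the Sobolev--Hardy inequality gives an $L^{2n/(n+2)}\to L^2$ bound for $L^{-1/2}$, hence an endpoint resolvent estimate, but you have not explained the passage from a resolvent bound to a bound on the spectral \emph{measure} $dE_{\SL}(1)$; that step is a limiting-absorption argument and is where the real work lies. ``Interpolating the two regimes'' is therefore not yet justified.

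In short: as a proof of the proposition your argument is correct and matches the paper once you are willing to quote \cite{COSY} for ${\rm (R_{p_0})}$; as an independent derivation of ${\rm (R_{p_0})}$ it is a reasonable outline but the high-mode/oscillatory analysis needs either a genuine uniform Bessel estimate (Airy-type in the transition region) together with a summation argument, or a comparison with the free restriction kernel that isolates the perturbation in the low modes.
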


\begin{proof}
It was proved in \cite[Section 10]{COSY} that $L$ satisfies
restriction estimate ${\rm (R_{p_0})}$
 for all $p_0 \in ((p_c^{\ast})',  \frac{2n}{n+2}]$. If $c \ge 0$, then $p = (p_c^{\ast})' = 1$ is included.
By Proposition~\ref{prop2.3}, ${\rm (R_{p_0})}$ and \eqref{RE1.4}
with $q=2$ are equivalent. Now Proposition~\ref{prop4.2} follows
from Theorem~\ref{th1.1} and Corollary~\ref{coro3.3}.
\end{proof}

 \bigskip

\noindent \noindent {\bf Acknowledgements:} This project was
supported by Australian Research Council Discovery Grant DP
110102488. The author would like to thank X.T. Duong, A. Sikora and
L.X.~Yan for fruitful discussions.

\medskip

\end{document}